\theoremstyle{plain}
\newtheorem{thm}{Theorem}[section]
\newtheorem{prop}[thm]{Proposition}
\newtheorem{lemma}[thm]{Lemma}
\theoremstyle{definition}
\newcommand{\EE}{\mathcal{E}}
\newcommand{\FF}{\mathcal{F}}
\newcommand{\X}{\mathcal{X}}
\newcommand{\R}{\mathbf{R}}
\newcommand{\Z}{\mathbf{Z}}
\newcommand{\1}{\mathbf{1}}
\newcommand{\TV}{\mathrm{TV}}
\newcommand{\tmix}{t_\mathrm{mix}}
\newcommand{\trel}{t_\mathrm{rel}}
\newcommand{\pimin}{\pi_\mathrm{min}}
\newcommand{\e}{\varepsilon}
\newcommand{\dsep}{d_{\textrm{sep}}}
\newcommand{\RRN}{\mathcal{R}_N}
\DeclareMathOperator{\Prob}{\mathbf{P}}
\DeclareMathOperator{\Var}{\textrm{Var}}
\DeclareMathOperator{\Binom}{\textrm{Binom}}
\DeclareMathOperator{\Real}{\textrm{Re}}
\DeclareMathOperator{\diam}{\textrm{diam}}
\newcommand{\cond}{\,|\,}
\newcommand{\st}{\,:\,}
\numberwithin{equation}{section}
\author{Daniel Jerison}
\title[General mixing time bounds]{General mixing time bounds for finite Markov chains via the absolute spectral gap}
\date{\today}
\begin{document}

\begin{abstract}
We prove an upper bound on the total variation mixing time of a finite Markov chain in terms of the absolute spectral gap and the number of elements in the state space. Unlike results requiring reversibility or irreducibility, this bound is finite whenever the chain converges. The dependence on the number of elements means that the new bound cannot capture the behavior of rapidly mixing chains; but an example shows that such dependence is necessary. We also provide a sharpened result for reversible chains that are not necessarily irreducible. The proof of the general bound exploits a connection between linear recurrence relations and Schur functions due to Hou and Mu, while the sharpened bound for reversible chains arises as a consequence of the stochastic interpretation of eigenvalues developed by Diaconis, Fill, and Miclo. In particular, for every reversible chain with nonnegative eigenvalues $\beta_j$ we find a strong stationary time whose law is the sum of independent geometric random variables having mean $1/(1-\beta_j)$.
\end{abstract}

\maketitle

\section{Introduction}

\subsection{Main results and discussion}

Let $P$ be the transition matrix for a discrete time Markov chain on a finite state space $\X$ of size $N$. Denote the eigenvalues of $P$ by $\beta_1,\ldots,\beta_N$, with $\beta_N = 1$. The absolute spectral gap of the chain is $1-\beta_\star$, where $\beta_\star = \max\{|\beta_j| \st 1\leq j\leq N-1 \}$.

The chain converges to a unique stationary distribution $\pi$ if and only if $\beta_\star < 1$. In this note we explore the quantitative relationship between the size of the absolute spectral gap and the time to stationarity. One of the first results in the modern theory of finite Markov chains, Proposition \ref{l2-basic} below, establishes this relationship when the chain is reversible and the value $\pimin = \min\{\pi(x) \st x\in\X\}$ is reasonably far from zero. The main contribution of the present work is an upper bound on total variation mixing time in terms of the absolute spectral gap, Theorem \ref{main}, that applies to all finite chains.

Suppose $\beta_\star < 1$. The total variation mixing time with parameter $0<\e<1$ is defined by
\[
\tmix(\e) = \min\{t\geq 0 \st \|P^t(x,\cdot) - \pi\|_\TV \leq \e \text{ for all } x\in\X \},
\]
where the total variation distance between two probability measures $\mu$ and $\nu$ on $\X$ is
\[
\|\mu - \nu\|_\TV = \sup_{S\subseteq\X} |\mu(S) - \nu(S)| = \frac{1}{2} \sum_{x\in\X} |\mu(x) - \nu(x)|.
\]
The relaxation time of the chain is $\trel = 1/(1-\beta_\star)$. We say $P$ is reversible (with respect to $\pi$) if  $\pi(x)P(x,y) = \pi(y)P(y,x)$ for all $x,y\in\X$. If $P$ is reversible and $\pimin>0$, all the eigenvalues $\beta_j$ are real. In that case we list them in increasing order, $-1 \leq \beta_1 \leq \beta_2 \leq \cdots \leq \beta_N = 1$.

\begin{prop} \label{l2-basic}
Let $P$ be a Markov transition matrix, reversible with respect to its stationary distribution $\pi$. For every $0<\e<1$,
\[
\log(2\e)^{-1} (\trel - 1) \leq \tmix(\e) \leq \left\lceil \frac{1}{2}(\log\pimin^{-1})\trel + \log(2\e)^{-1}\trel \right\rceil.
\]
\end{prop}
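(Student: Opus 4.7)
The plan is to prove both bounds by diagonalizing $P$ on $L^2(\pi)$. Reversibility (together with $\pimin > 0$, which I may assume since otherwise the upper bound is infinite) makes $P$ self-adjoint on $L^2(\pi)$, so there is an orthonormal basis of real eigenfunctions $\{f_1,\ldots,f_N\}$ with $Pf_j = \beta_j f_j$, $\beta_1 \leq \cdots \leq \beta_N = 1$, and $f_N \equiv 1$.

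For the upper bound I would run the standard $L^2$ contraction estimate. Cauchy--Schwarz gives $4\|P^t(x,\cdot)-\pi\|_\TV^2 \leq \sum_y \pi(y)(P^t(x,y)/\pi(y)-1)^2$, while Parseval applied to the eigenfunction expansion $P^t(x,\cdot)/\pi = \sum_j \beta_j^t f_j(x)f_j$ yields
\[
\sum_y \pi(y)\biggl(\frac{P^t(x,y)}{\pi(y)}-1\biggr)^2 = \sum_{j<N}\beta_j^{2t}f_j(x)^2 \leq \beta_\star^{2t}\sum_j f_j(x)^2 = \frac{\beta_\star^{2t}}{\pi(x)} \leq \frac{\beta_\star^{2t}}{\pimin},
\]
where the last equality is the Parseval identity $\sum_j f_j(x)^2 = 1/\pi(x)$ for the delta-function density. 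Demanding the final expression to be at most $4\e^2$ is the condition $t\log(1/\beta_\star) \geq \tfrac{1}{2}\log\pimin^{-1} + \log(2\e)^{-1}$, which is ensured by $t \geq \trel\bigl[\tfrac{1}{2}\log\pimin^{-1} + \log(2\e)^{-1}\bigr]$ via the elementary bound $\log(1/\beta_\star) \geq 1-\beta_\star = 1/\trel$. Taking the ceiling gives the stated right-hand side.

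For the lower bound I would test against an extremal eigenfunction. Let $f$ be a real eigenfunction with $Pf = \beta f$ and $|\beta| = \beta_\star$, normalized so that $\|f\|_\infty = 1$ is attained at some $x \in \X$. Since $\beta \neq 1$, $\sum_y \pi(y)f(y) = 0$, so the duality between total variation and bounded test functions gives
\[
\beta_\star^t = |P^t f(x)| = \Bigl|\textstyle\sum_y (P^t(x,y)-\pi(y))f(y)\Bigr| \leq 2\|f\|_\infty\|P^t(x,\cdot)-\pi\|_\TV = 2\|P^t(x,\cdot)-\pi\|_\TV.
\]
At $t = \tmix(\e)$ this forces $\beta_\star^{\tmix(\e)} \leq 2\e$, and the complementary elementary inequality $\log(1/\beta_\star) \leq (1-\beta_\star)/\beta_\star$ (i.e.\ $\log(1+y)\leq y$) rearranges the result into $\tmix(\e) \geq (\trel-1)\log(2\e)^{-1}$.

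Nothing here is substantively hard; the main thing to track is the \emph{direction} of each of the two elementary inequalities $1-\beta_\star \leq \log(1/\beta_\star) \leq (1-\beta_\star)/\beta_\star$, matching each to the bound it serves. The degenerate case $\beta_\star = 0$, where $\trel-1 = 0$, must be handled separately but makes the lower bound trivially $0 \leq \tmix(\e)$.
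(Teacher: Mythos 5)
Your proof is correct and is exactly the standard argument the paper points to: the upper bound is the $\chi^2$/spectral-decomposition estimate of Diaconis--Stroock (Proposition 3 of \cite{DS91}), and the lower bound is the extremal-eigenfunction argument of Theorem 12.4 in \cite{LPW09}. The paper gives no independent proof, only these citations, so there is nothing further to compare.
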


Proposition \ref{l2-basic} is originally due to Aldous, who proved a continuous time version in \cite{A82}. The lower bound given here is Theorem 12.4 in \cite{LPW09}. The upper bound follows from Proposition 3 in \cite{DS91}.

The lower bound in Proposition \ref{l2-basic} is sharp in many examples and holds even if $P$ is not reversible. (The proof given in \cite{LPW09} does not require reversibility.) The upper bound, while also sharp in some examples, does not extend to the nonreversible case. It can be poor when $\pimin$ is very small, and it gives no information when $\pimin = 0$ (such as when the Markov chain is absorbing).

A technical note: when we allow $\pimin = 0$, the class of reversible chains is not well-behaved. For example, they can have complex eigenvalues. To state our results precisely, we let $\RRN$ denote the set of $N\times N$ Markov transition matrices reversible with respect to a stationary distribution $\pi$ such that $\pimin>0$, and $\overline{\RRN}$ denote the closure of $\RRN$ in $\R^{N^2}$. Our later results requiring reversibility will apply to matrices in $\overline{\RRN}$.

The main theorem of this note needs no such assumptions.

\begin{thm} \label{main}
Let $P$ be a Markov transition matrix on a state space of size $N$. For every $0<\e<1$,
\[
\tmix(\e) \leq 2N\trel\log\trel + 4(1+\log 2)N\trel + 2(\log\e^{-1} - 1)\trel.
\]
\end{thm}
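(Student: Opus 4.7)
The plan is to exploit the linear-recurrence structure of the matrix entries. Fix $x,y\in\X$. Since the eigenvalues of $P$ are $\beta_1,\dots,\beta_{N-1},\beta_N=1$ and $P^t\to\Pi$, the rank-one matrix with rows $\pi$, as $t\to\infty$, an application of the Cayley-Hamilton theorem to $P$ shows that the scalar sequence $r_t:=P^t(x,y)-\pi(y)$ satisfies a linear recurrence of order at most $N-1$ whose characteristic polynomial is $\prod_{j=1}^{N-1}(z-\beta_j)$, regardless of whether $P$ is diagonalizable or whether some $\beta_j$ are complex. This reduces the problem to controlling a scalar recurrence with known initial values $r_0,\dots,r_{N-2}$ lying in $[-1,1]$.

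I then invoke the Hou--Mu identity, which expresses any such solution as a finite linear combination of complete homogeneous symmetric polynomials of the characteristic roots:
\[
r_t=\sum_{k=0}^{N-2}q_k\, h_{t-k}(\beta_1,\dots,\beta_{N-1}),
\]
where each coefficient $q_k$ is an explicit linear combination of the $r_s$'s weighted by elementary symmetric polynomials $e_i(\beta)$. Combined with $|r_s|\leq 1$ and $|e_i(\beta)|\leq\binom{N-1}{i}$, this bounds $|q_k|$ by a polynomial in $N$ independent of $t$. The workhorse estimate on the symmetric-function side is
\[
|h_m(\beta_1,\dots,\beta_{N-1})|\leq h_m(\beta_\star,\dots,\beta_\star)=\binom{m+N-2}{N-2}\beta_\star^m,
\]
which, substituted back, yields $|r_t|\leq C(N)\,t^{N-2}\beta_\star^t$; summing $\tfrac12\sum_{y\in\X}|r_t|$ then gives $\|P^t(x,\cdot)-\pi\|_\TV\leq C'(N)\,t^{N-2}\beta_\star^t$ for an explicit polynomial $C'(N)$ in $N$.

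Finally I invert this tail estimate. The condition $C'(N)\, t^{N-2}\beta_\star^t\leq\e$, using $\log\beta_\star^{-1}\geq 1-\beta_\star=1/\trel$, becomes $t/\trel\geq (N-2)\log t+\log(C'(N)/\e)$. Substituting the ansatz $t=2N\trel\log\trel+aN\trel+b\trel\log\e^{-1}$ and using $\log t\leq \log(N\trel)+\log\log\trel$ at the critical value, I expect the inequality to close with $a,b$ matching the claimed constants $4(1+\log 2)$ and $2$; residual $\log N$ contributions are absorbed into the $aN\trel$ slack. The main obstacle will be this bookkeeping: the Schur-function bound is loose by a polynomial-in-$N$ factor, and combining it with the $t\mapsto\log t$ inversion must be done tightly enough to produce the leading coefficient $2N$. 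The Hou--Mu formulation is essential precisely because a direct eigenvector expansion would produce coefficients that blow up when eigenvalues cluster, ruining the polynomial-in-$N$ dependence.
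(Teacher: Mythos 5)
Your architecture matches the paper's: Cayley--Hamilton gives an order-$(N-1)$ linear recurrence for $P^t-\Pi$ with characteristic polynomial $\prod_{j<N}(x-\beta_j)$, the recurrence coefficients are symmetric functions of the $\beta_j$ with nonnegative monomial expansions, evaluating at $\beta_\star$ yields a bound of the form $C(N)\,t^{N-2}\beta_\star^{\,t-(N-2)}$, and one inverts. Your symmetric-function step is a dual but equivalent formulation: you expand the scalar solution in the fundamental system $h_{t-k}$ with coefficients $q_k=\sum_{j\le k}(-1)^{k-j}e_{k-j}r_j$, while the paper keeps the matrix identity $P^t-\Pi=\sum_k C^t(k+1,1)(P^k-\Pi)$ and identifies $C^t(k+1,1)$ as a hook Schur function $\pm s_{(t+2-N,1,\dots,1)}$; the two are reorganizations of one another (hook Schur functions expand as $\sum_i(-1)^ih_{k+i}e_{\ell-i}$), and both give $C(N)=2^{O(N)}$, which suffices. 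Two small slips: the exponent must be $\beta_\star^{\,t-(N-2)}$, not $\beta_\star^{t}$, and your $C'(N)$ is larger than the paper's $2^{N-2}/(N-2)!$ by a factor of order $N$, costing an extra $O(\trel\log N)$ that is absorbable since $\log N\le N$.

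The genuine gap is the final inversion, which you defer to ``bookkeeping.'' The one tool you name for it, $\log t\le\log(N\trel)+\log\log\trel$ at the critical $t$, is false: at $t=t_\ast$ the summand $2(\log\e^{-1}-1)\trel$ can dominate, so $\log t_\ast$ carries a $\log\log\e^{-1}$ contribution, and feeding $(N-2)\log t_\ast$ back into the inequality produces an $N\trel\log\log\e^{-1}$ term not visibly covered by the claimed bound. The paper's resolution is to note that $f(t)=d\log t+(t-d)\log\beta_\star$ with $d=N-2$ is concave and to bound it by its tangent line at $t_0=2d/\log\beta_\star^{-1}$; this replaces $d\log t$ by the fixed quantity $d\log(2d/\log\beta_\star^{-1})$ plus a term linear in $t$, so the $\e$-dependence enters only linearly and the $d\log d$ from Stirling cancels exactly, producing the stated constants. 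A direct substitution can be rescued (the same Stirling term controls $d\log\log\e^{-1}$ via $u-\log u\ge1$ with $u=\log\e^{-1}/d$), but that is precisely the nontrivial computation you have not carried out, and the paper itself remarks that a careless choice of $t$ at this step makes the left-hand side blow up.
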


The bound in Theorem \ref{main} has a factor of $N$ instead of a factor of $\log \pimin^{-1}$. For chains with stationary distribution close to uniform, the upper bound in Proposition \ref{l2-basic} is of order $(\log N)\trel$, a significant improvement. Example \ref{skip-free} is a nonreversible chain with uniform stationary distribution whose mixing time is of order $N\trel$. Hence the replacement of $\log N$ with $N$ in Theorem \ref{main} is a necessary consequence of dropping the reversibility assumption.

Following Aldous \cite{A83}, we say a Markov chain mixes rapidly if its total variation mixing time is small compared to the size of its state space. Example \ref{skip-free} shows that one cannot conclude a nonreversible chain mixes rapidly based solely on upper bounds for the eigenvalues. So, necessarily, Theorem \ref{main} cannot capture the behavior of rapidly mixing chains.

The author has not been able to find a chain whose mixing time is slower than order $N\trel$. So it is an open question whether the extra factor of $\log\trel$ in Theorem \ref{main} is necessary or whether it is simply an artifact of the proof. But if we assume $P$ is reversible, the $\log\trel$ term can be eliminated.

\begin{thm} \label{rev-sharpen}
Let $P\in\overline{\RRN}$ be a reversible Markov transition matrix on a state space of size $N$. For every $0<\e<1$,
\[
\tmix(\e) \leq 2\left( N + 2\log\e^{-1} - 1 + \sqrt{2(N-2)\log\e^{-1}} \right) \trel.
\]
If the eigenvalues of $P$ are all nonnegative, the initial factor of $2$ can be omitted.
\end{thm}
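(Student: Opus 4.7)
The strategy is to construct, for any reversible chain with nonnegative eigenvalues $0 \leq \beta_1 \leq \cdots \leq \beta_{N-1} < 1$, a strong stationary time $T$ distributed as $\sum_{j=1}^{N-1} G_j$, where the $G_j$ are independent geometric random variables with $\E[G_j] = 1/(1-\beta_j)$, and then to control $\Prob(T > t)$. The version with the leading factor of $2$ for general reversible chains will follow by applying this bound to $P^2$.

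First I would construct the strong stationary time for $P \in \RRN$ by invoking the Diaconis--Fill--Miclo stochastic interpretation of eigenvalues: the monotone spectral dual of $P$ should be identifiable with a pure-birth chain on $\{0,1,\ldots,N-1\}$ whose holding time in state $j$ is geometric with parameter $1-\beta_j$, so that its absorption time at $N-1$ is a strong stationary time for $P$ with the desired law. To extend to $P \in \overline{\RRN}$, I would approximate $P$ by elements of $\RRN$ whose eigenvalues converge to those of $P$ and pass to the limit, using lower semicontinuity of the mixing time and continuity of the right-hand side in the $\beta_j$.

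Given such a $T$, the standard chain of inequalities $\|P^t(x,\cdot)-\pi\|_\TV \leq \dsep(t) \leq \Prob(T > t)$ reduces the problem to an upper-tail bound. One has $\E[T] = \sum_j 1/(1-\beta_j) \leq (N-1)\trel$ and $\Var(T) = \sum_j \beta_j/(1-\beta_j)^2$, which after separating the contribution of the largest relaxation time yields a variance estimate of order $(N-2)\trel^2$. I would then apply a Chernoff-type bound using the moment generating function
\[
\E[e^{\lambda T}] = \prod_{j=1}^{N-1} \frac{(1-\beta_j)e^\lambda}{1-\beta_j e^\lambda},
\]
valid for $\lambda \in (0,-\log\beta_\star)$, and optimize over $\lambda$. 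I expect this optimization to be the main obstacle: geometrics have exponential rather than Gaussian tails, so the bound must split into a sub-Gaussian term of order $\sqrt{2(N-2)\log\e^{-1}}\,\trel$ and an exponential correction of order $2\log\e^{-1}\,\trel$, and matching the precise constants in the theorem requires a careful choice of $\lambda$ balancing the two regimes.

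Finally, for a general $P \in \overline{\RRN}$ with possibly negative eigenvalues, the matrix $P^2$ lies in $\overline{\RRN}$ and has nonnegative eigenvalues $\beta_j^2$. Its relaxation time is $1/(1-\beta_\star^2) = \trel/(1+\beta_\star) \leq \trel$, so the nonnegative case yields
\[
\tmix_{P^2}(\e) \leq \left( N + 2\log\e^{-1} - 1 + \sqrt{2(N-2)\log\e^{-1}} \right)\trel.
\]
Since $\e$-closeness of $P^{2t}(x,\cdot)$ to $\pi$ is equivalent for $P$ and $P^2$, one has $\tmix_P(\e) \leq 2\tmix_{P^2}(\e)$, which produces the stated inequality with the extra factor of $2$.
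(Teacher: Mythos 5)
Your architecture is the same as the paper's: build the strong stationary time $\tau_1+\cdots+\tau_{N-1}$ from the Diaconis--Fill--Miclo pure-birth dual (this is exactly the paper's Theorem \ref{sst}, proved via Proposition \ref{link}), dominate each $\tau_j$ by a geometric with parameter $1-\beta_\star$, apply a Chernoff bound, and handle negative eigenvalues by passing to $P^2$ (where $1/(1-\beta_\star^2)\leq\trel$ gives the factor of $2$, as in your last paragraph). The limit argument for $P\in\overline{\RRN}$ is also fine, though the paper instead takes the limit inside the construction of the link $\Lambda$ so that Theorem \ref{sst} holds directly on $\overline{\RRN}$.

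The genuine gap is the step you yourself flag as ``the main obstacle'': you never carry out the tail bound that produces the constants $N+2\log\e^{-1}-1+\sqrt{2(N-2)\log\e^{-1}}$, and the route you propose --- optimizing $e^{-\lambda t}\prod_j(1-\beta_j)e^\lambda/(1-\beta_je^\lambda)$ over $\lambda\in(0,-\log\beta_\star)$ --- is the awkward way to do it; the optimizer has no closed form that cleanly splits into the ``sub-Gaussian plus exponential correction'' shape you describe, and your variance heuristic does not by itself yield a bound of the stated form. The paper sidesteps this entirely: once the $\tau_j$ are dominated by i.i.d.\ geometrics with success probability $p=1-\beta_\star$, the sum $\eta$ is the trial index of the $(N-1)$st success, so $\Prob(\eta>t)=\Prob(Y\leq N-2)$ with $Y\sim\Binom(t,p)$ \emph{exactly}, and the standard multiplicative Chernoff bound $\Prob(Y\leq(1-\delta)tp)\leq\exp(-\delta^2tp/2)$ with $(1-\delta)tp=N-2$ reduces everything to solving a quadratic in $tp$; the inequality $\sqrt{a+b}\leq\sqrt{a}+\sqrt{b}$ then delivers $tp\geq N-2+2\log\e^{-1}+\sqrt{2(N-2)\log\e^{-1}}$, and the extra $+1$ in the theorem absorbs the integrality of $\tmix$. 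To complete your proof you should replace the direct moment-generating-function optimization with this negative-binomial/binomial duality, or else actually exhibit a choice of $\lambda$ and verify it reproduces the stated constants --- as written, the theorem's specific bound is asserted rather than derived.
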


For chains where $\pimin$ is exponentially small in $N$, the bounds given by Proposition \ref{l2-basic} and Theorem \ref{rev-sharpen} are comparable. If $\pimin$ is larger, the bound in Proposition \ref{l2-basic} is better. If $\pimin$ is super-exponentially small or zero, the bound in Theorem \ref{rev-sharpen} is better.

In section \ref{biased-walk} is a reversible chain for which $\pimin$ is exponentially small in $N$. Here the upper bounds from Proposition \ref{l2-basic} and Theorem \ref{rev-sharpen} are comparable, and both are sharp up to a constant factor. In the same section is a reversible chain for which $\pimin$ is super-exponentially small in $N$. The Proposition \ref{l2-basic} upper bound is no longer sharp, but the Theorem \ref{rev-sharpen} bound still is. Example \ref{pure-birth-section} is a reversible chain for which $\pimin = 0$. Here Proposition \ref{l2-basic} gives no upper bound, but Theorem \ref{rev-sharpen} is sharp.

Many chains mix significantly faster than the upper bounds in Theorems \ref{main}-\ref{rev-sharpen}. 
Example \ref{hypercube} is a rapidly mixing reversible chain for which neither the upper bound nor the lower bound in Proposition \ref{l2-basic} is sharp, but both are reasonably close, while Theorems \ref{main} and \ref{rev-sharpen} are far too conservative.

The following result is used to prove Theorem \ref{rev-sharpen} and is perhaps of independent interest. It is more powerful because it incorporates information about the entire spectrum rather than just the absolute spectral gap. In addition, it provides an upper bound on separation distance, which is stronger than total variation distance. The separation distance between two probability measures $\mu$ and $\nu$ on $\X$ is
\[
\dsep(\mu,\nu) = \max_{x\in\X} \left[ 1 - \frac{\mu(x)}{\nu(x)} \right].
\]
We have $\|\mu-\nu\|_\TV \leq \dsep(\mu,\nu)$.

\begin{thm} \label{sst}
Let $P$ be a Markov transition matrix on a state space $\X$ of size $N$. Suppose $P\in\overline{\RRN}$ is reversible with respect to $\pi$ and its eigenvalues $0 \leq \beta_1 \leq \cdots \leq \beta_{N-1} < \beta_N = 1$ are nonnegative. For $1\leq j\leq N-1$, let $\tau_j$ be independent geometric random variables with mean $1/(1-\beta_j)$. That is, for $t\geq 1$, $\Prob(\tau_j = t) = \beta_j^{t-1}(1-\beta_j)$. Then for any probability distribution $\mu$ on $\X$ and any $t\geq 0$,
\[
\dsep(P^t(\mu,\cdot),\pi) \leq \Prob(\tau_1 + \cdots + \tau_{N-1} > t).
\]
\end{thm}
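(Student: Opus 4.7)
The plan is to construct, for every initial distribution $\mu$, a randomized strong stationary time $T_\mu$ for the chain started at $\mu$ whose law is stochastically dominated by that of $\tau_1 + \cdots + \tau_{N-1}$. The Aldous--Diaconis inequality $\dsep(P^t(\mu,\cdot),\pi) \leq \Prob(T_\mu > t)$ for any strong stationary time then gives the theorem. Before anything else, I would reduce to $P \in \RRN$ via continuity: both sides of the desired inequality vary continuously with $P$ for fixed $t$ (and the eigenvalues are continuous functions of $P$), and any $P \in \overline{\RRN}$ with nonnegative spectrum can be approached by members of $\RRN$ with nonnegative spectrum, for instance by first perturbing to make $\pimin > 0$ and then passing to $(1-\delta)P + \delta I$ if needed.

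With $\pimin > 0$ in hand, I would invoke the Diaconis--Fill theory of strong stationary duality, in the form extended by Miclo. The aim is to build an auxiliary Markov chain $\hat P$ on $\hat\X = \{1,\ldots,N\}$ and a stochastic link $\Lambda\colon\hat\X\times\X\to[0,1]$ satisfying the intertwining $\hat P\Lambda = \Lambda P$ and the consistency $\Lambda(N,\cdot) = \pi$. The defining property of such a dual is that, for any lift $\hat\mu$ of $\mu$ (meaning $\hat\mu\Lambda = \mu$) coupled coherently with the original chain, the hitting time of the absorbing state $N$ in $\hat P$ is a strong stationary time for the original chain. The crucial structural choice is to take $\hat P$ to be \emph{pure-birth}, i.e.\ upward skip-free on $\{1,\ldots,N\}$. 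Its matrix is then upper triangular, so its eigenvalues are its diagonal entries, which we can arrange to be $\beta_1,\ldots,\beta_{N-1},\beta_N=1$. This is precisely the point where nonnegativity of the $\beta_j$ is essential, since a stochastic matrix cannot have negative diagonal entries.

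Once such a pure-birth dual is in place, the hitting time of $N$ from state $1$ is the sum of independent holding times at states $1,\ldots,N-1$, each geometric with parameter $1-\hat P(j,j) = 1-\beta_j$; hence it is distributed exactly as $\tau_1 + \cdots + \tau_{N-1}$. For an arbitrary initial distribution $\mu$ on $\X$ one selects any lift $\hat\mu$ on $\{1,\ldots,N\}$; the skip-free structure then forces the hitting time of $N$ from $\hat\mu$ to be stochastically dominated by the hitting time from state $1$, yielding the required $T_\mu$. (The stronger equality in law advertised in the abstract can be recovered by appending an auxiliary geometric delay to soak up any slack.)

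The principal obstacle is the construction of the pure-birth dual $\hat P$ with the prescribed spectrum for a general reversible $P$. In the birth-and-death case, Diaconis and Fill exhibit $\hat P$ and $\Lambda$ explicitly. For general reversible chains with nonnegative eigenvalues, Miclo's spectral/evolving-set framework produces the required dual by intertwining $P$ with a canonical triangular model on the spectrum. The bulk of the technical work lies in faithfully running this construction, verifying the intertwining and consistency identities, and checking that a valid lift $\hat\mu$ always exists for every $\mu$; the geometric-sum conclusion then follows immediately from the triangular structure of $\hat P$.
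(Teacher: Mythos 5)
Your strategy is the same as the paper's: intertwine $P$ with the pure-birth chain $Q$ whose diagonal carries the spectrum, and realize the strong stationary time as the absorption time of the dual, a sum of independent geometrics. Two steps that you treat as routine are, however, genuine gaps. First, your handling of a general initial distribution does not work as written: for a \emph{fixed} stochastic link $\Lambda$ with rows $\mu_1,\ldots,\mu_N$, a nonnegative lift $\hat\mu$ with $\hat\mu\Lambda=\mu$ exists only when $\mu$ lies in the convex hull of the rows of $\Lambda$, which is in general a proper subset of the simplex; so ``select any lift'' fails for most $\mu$. The fix (Proposition \ref{link}) is to build the link \emph{from} $\mu$: set $\Lambda(1,\cdot)=\mu$ and $\Lambda(j,\cdot)=\mu\prod_{i=1}^{j-1}(P-\beta_iI)/(1-\beta_i)$ (Miclo's local equilibria), so that $\delta_1$ is trivially a lift, the dual starts at state $1$, and the absorption time is \emph{exactly} $\tau_1+\cdots+\tau_{N-1}$ --- no auxiliary geometric delay is needed. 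With this definition the intertwining $\Lambda P=Q\Lambda$ is a one-line computation, and $\Lambda(N,\cdot)=\pi$ follows from Cayley--Hamilton together with simplicity of the eigenvalue $1$.

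Second, you never address why this link is \emph{stochastic}, i.e.\ why its entries are nonnegative. That is the crux of the construction and the only place reversibility enters: it follows from a linear-algebra theorem of Micchelli and Willoughby, as in Lemma 4.1 of Fill's paper, and it genuinely can fail for nonreversible $P$ with nonnegative real eigenvalues. Your proposal correctly isolates where nonnegativity of the $\beta_j$ is used (the diagonal of $Q$ must be a probability), but it leaves the role of reversibility unaccounted for, and ``faithfully running Miclo's construction'' is precisely where all the content of the theorem lives. The reduction from $\overline{\RRN}$ to $\RRN$ by perturbation and laziness is fine and matches the paper.
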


Proving Theorem \ref{rev-sharpen} from Theorem \ref{sst} is relatively straightforward when the eigenvalues of $P$ are nonnegative. When $P$ has negative eigenvalues, we apply Theorem \ref{sst} to $P^2$ and use that the separation distance $\dsep(P^t(\mu,\cdot),\pi)$ is nonincreasing in $t$.

Example \ref{sticky-walk} is a reversible chain with nonnegative eigenvalues for which the lower bound in Proposition \ref{l2-basic} is sharp. Theorem \ref{sst} gives the correct matching upper bound. The upper bound from Proposition \ref{l2-basic} is slightly too high, and the upper bound from Theorem \ref{rev-sharpen} is somewhat higher.

Throughout this note we use the convention of writing measures as row vectors and functions as column vectors. If $\mu$ is a measure on $\X$ and $f$ is a function on $\X$, we have
\[
\mu P = P(\mu,\cdot) = \sum_{x\in\X} \mu(x)P(x,\cdot), \qquad \mu f = \sum_{x\in\X} \mu(x)f(x), \qquad Pf(x) = \sum_{y\in\X} P(x,y)f(y).
\]

\subsection{Methods of proof}

The proofs of Theorem \ref{main} and Theorem \ref{sst} are quite different. Theorem \ref{main} is proved using combinatorial arguments. Let $\Pi$ be the $N\times N$ matrix given by $\Pi(x,y) = \pi(y)$. The sequence $(P^t - \Pi: t\geq 0)$ satisfies a linear recurrence relation whose characteristic polynomial is $g(x) = (x-\beta_1)\cdots (x-\beta_{N-1})$. It follows that for each $t\geq 0$ there are coefficients $c_{t,k}$ such that
\[
P^t - \Pi = \sum_{k=0}^{N-2} c_{t,k} (P^k - \Pi).
\]
Let $C$ be the companion matrix for $g(x)$. This is an $N \times N$ matrix with ones below the diagonal, the coefficients of $-g(x)$ in the right column, and zeros everywhere else. Then the coefficients $c_{t,k}$ are entries of $C^t$.

In Section 4 we show that each entry of $C^t$ is a Schur function in the roots $\beta_1,\ldots,\beta_{N-1}$. This result is originally due to Hou and Mu \cite{HM03}, who obtain it as a special case of a more general theory. We provide a direct proof.

To finish proving Theorem \ref{main}, we use the Schur function formula to bound each coefficient $c_{t,k}$ in terms of $\beta_\star = \max\{|\beta_j| \st 1\leq j\leq N-1 \}$. The triangle inequality then gives a bound on $P^t - \Pi$.

Theorem \ref{sst} arises as a consequence of the stochastic interpretation of eigenvalues developed by Diaconis, Fill, and Miclo \cite{DM09} \cite{F09a} \cite{F09b} \cite{M10}. First, a strong stationary time for a Markov chain $(X_t: t\geq 0)$ having stationary distribution $\pi$ is a randomized stopping time $\tau$ such that for all $t\geq 0$ and $x\in\X$,
\[
\Prob(X_\tau=x \cond \tau=t) = \pi(x).
\]
Given a strong stationary time $\tau$ for $(X_t)$, we have for all $t\geq 0$ that
\[
\dsep(\Prob(X_t\in\cdot),\pi) \leq \Prob(\tau > t).
\]

Suppose $P\in\overline{\RRN}$ is reversible with respect to $\pi$ and all its eigenvalues are nonnegative. Define the pure birth transition matrix $Q$ on $\{1,\ldots,N\}$ by
\begin{equation} \label{pure-birth-matrix}
Q = \begin{bmatrix}
\beta_1 & 1-\beta_1 & 0 & 0 & \cdots & 0 & 0 \\
0 & \beta_2 & 1-\beta_2 & 0 & \cdots & 0 & 0 \\
0 & 0 & \beta_3 & 1-\beta_3 & \cdots & 0 & 0 \\
\vdots & \vdots & \vdots & \ddots & \ddots & \ddots & \vdots \\
0 & 0 & 0 & 0 & \cdots & \beta_{N-1} & 1-\beta_{N-1} \\
0 & 0 & 0 & 0 & \cdots & 0 & \beta_N
\end{bmatrix}.
\end{equation}
Recall that we list $\beta_1,\ldots,\beta_N$ in increasing order, so in particular $\beta_N=1$.

A link between $P$ and $Q$ is an $N\times N$ matrix $\Lambda$ whose rows are indexed by $\{1,\ldots,N\}$ and whose columns are indexed by $\X$, such that $\Lambda P = Q \Lambda$. It is a stochastic link if all the entries of $\Lambda$ are nonnegative and each row of $\Lambda$ sums to 1. Closely following arguments of Fill \cite{F09a} and Miclo \cite{M10}, in Section 3 we show that for any probability distribution $\mu$ on $\X$, there is a stochastic link $\Lambda$ between $P$ and $Q$ such that the first row of $\Lambda$ is $\mu$ and the $N$th row of $\Lambda$ is $\pi$.

Let $(X_t: t\geq 0)$ be the Markov chain on $\X$ with initial distribution $X_0 \sim \mu$ and transition matrix $P$, and let $(Y_t: t\geq 0)$ be the Markov chain on $\{1,\ldots,N\}$ with initial value $Y_0 = 1$ and transition matrix $Q$. In the language of Diaconis and Fill \cite{DF90}, $(Y_t)$ is a strong stationary dual for $(X_t)$. Thus, $(X_t)$ has a strong stationary time $\tau$ equal in law to $\min\{t\geq 0\st Y_t = N\}$, which is distributed as the sum of $N-1$ independent geometric random variables as in the statement of Theorem \ref{sst}.

\subsection{Other techniques}

In this subsection we discuss a few techniques that can be used in conjunction with or in place of Theorems \ref{main}-\ref{sst}. The list is far from exhaustive. For more information, see the books \cite{LPW09} and \cite{MT06}.

\subsubsection{Diagonalization}

Sometimes it is possible to find explicit formulas for the eigenvalues and eigenvectors of a Markov chain. A few examples are in Chapter 12 of \cite{LPW09}. If the chain is a random walk on a finite group, one can use representation theory to compute such formulas; see e.g. \cite{D88}.

Once the chain is diagonalized, Proposition \ref{l2-basic} or Theorems \ref{main}-\ref{sst} can be applied directly. Exact formulas for mixing time can be written in terms of the eigenvectors, but such formulas are sometimes difficult to work with. Still, this approach can lead to sharp bounds, as in Diaconis and Shahshahani's analysis of the random transposition walk on the symmetric group \cite{DS81}.

\subsubsection{Poincar\'e inequalities}

Suppose $P$ is reversible with respect to $\pi$ and $\pimin>0$. The variance of a function $f:\X\to\R$ with respect to $\pi$ is $\Var_\pi(f) = \pi(f^2) - (\pi f)^2$. By the variational characterization of eigenvalues (see Lemma 1.21 in \cite{MT06}),
\begin{align*}
1-\beta_{N-1} &= \inf_{\substack{f:\X\to\R \\ \Var_\pi(f)>0}} \frac{\EE(f,f)}{\Var_\pi(f)}, \\
\beta_1 - (-1) &= \inf_{\substack{f:\X\to\R \\ \Var_\pi(f)>0}} \frac{\FF(f,f)}{\Var_\pi(f)},
\end{align*}
where the Dirichlet form $\EE(f,f)$ and the parity form $\FF(f,f)$ are defined by
\begin{align*}
\EE(f,f) &= \frac{1}{2}\sum_{x,y\in\X} \pi(x)P(x,y)[f(x)-f(y)]^2, \\
\FF(f,f) &= \frac{1}{2}\sum_{x,y\in\X} \pi(x)P(x,y)[f(x)+f(y)]^2.
\end{align*}

A Poincar\'e inequality for the chain is a statement of the form
\begin{equation} \label{Poincare}
\Var_\pi(f) \leq \kappa \EE(f,f) \qquad \text{for all } f:\X\to\R.
\end{equation}
It follows immediately that $\beta_{N-1} \leq 1-\frac{1}{\kappa}$. Poincar\'e inequalities can be proved using path arguments, introduced by Jerrum and Sinclair \cite{JS89} and extended by Diaconis and Stroock \cite{DS91} and by Sinclair \cite{S92}. Choose for each pair $x,y\in\X$ a path $\Gamma_{xy} = (x=x_0,x_1,\ldots,x_d=y)$ with each $P(x_i,x_{i+1})>0$. Denote the length of such a path by $|\Gamma_{xy}| = d$. Sinclair \cite{S92} proved that given a choice of paths, \eqref{Poincare} holds with
\begin{equation} \label{canonical}
\kappa = \max_{\{(w,z):P(w,z)>0\}} \frac{1}{\pi(w)P(w,z)} \sum_{\substack{x,y\in\X \\ (w,z)\in\Gamma_{xy}}} \pi(x)\pi(y)|\Gamma_{xy}|.
\end{equation}
If all the paths $\Gamma_{xy}$ have odd length, it also holds that $\Var_\pi(f) \leq \kappa\FF(f,f)$ for all $f$, meaning that the absolute spectral gap is at least $\frac{1}{\kappa}$.

There are several variants of \eqref{canonical}, including one using a weighted average of paths between each pair $x,y\in\X$ instead of a single path. Diaconis and Saloff-Coste \cite{DSC93a} \cite{DSC93b} generalized the path method to a theory of comparison of Markov chains. Suppose $\hat{P}$ is another Markov transition matrix on $\X$, with Dirichlet form $\hat{\EE}$ and parity form $\hat{\FF}$. Instead of choosing a path (or weighted average of paths) between every pair $x,y\in\X$, one only chooses paths between pairs $x,y$ with $\hat{P}(x,y) > 0$. The result is a bound of the form $\hat{\EE}(f,f) \leq A\EE(f,f)$, so that a Poincar\'e inequality for $\hat{P}$ yields a Poincar\'e inequality for $P$. If all the paths have odd length, one also has $\hat{\FF}(f,f) \leq A\FF(f,f)$. Taking $\hat{P}(x,y) = \pi(y)$ for all $x,y\in\X$ recovers \eqref{canonical}. See \cite{DGJM06} for more details.

\subsubsection{Cheeger constants}

The Cheeger constant for the chain with transition matrix $P$ is given by
\[
\Phi = \min_{S\subseteq\X \st 0<\pi(S)\leq 1/2} \frac{\sum_{x\in S, y\notin S} \pi(x)P(x,y)}{\pi(S)}.
\]
Sinclair and Jerrum \cite{SJ89} and, independently, Lawler and Sokal \cite{LS88} proved that if $P$ is reversible and $\pimin>0$,
\begin{equation} \label{Cheeger}
\frac{\Phi^2}{2} \leq 1-\beta_{N-1} \leq 2\Phi.
\end{equation}
Bounds on $\Phi$ can be obtained using path arguments similar to those described above. Fulman and Wilmer \cite{FW99} show that in many cases, the resulting lower bound on the spectral gap $1-\beta_{N-1}$ is worse than the one obtained by a Poincar\'e inequality. They also point out instances where $\Phi$ can be bounded using other methods, and the resulting bound on the spectral gap is better than what is given by path arguments.

Combining the upper bound $1-\beta_{N-1} \leq 2\Phi$ with the lower bound of Proposition \ref{l2-basic} gives a lower bound on mixing time in terms of $\Phi$. For instance, one has
\[
\tmix\left(\frac{1}{2e}\right) \geq \frac{1}{2\Phi}-1.
\]
This bound also holds (up to a constant factor) without assuming reversibility. A direct argument not using eigenvalues gives (\cite{DGJM06}, Theorem 17; \cite{LPW09}, Theorem 7.3)
\[
\tmix\left(\frac{1}{2e}\right) \geq \frac{\frac{1}{2}-\frac{1}{2e}}{\Phi}.
\]

Suppose $P$ is reversible and lazy, that is, $P(x,x)\geq 1/2$ for all $x\in\X$. This means all the eigenvalues are nonnegative, so \eqref{Cheeger} bounds the absolute spectral gap from below. Combining with the upper bound of Proposition \ref{l2-basic}, we obtain
\[
\tmix(\e) \leq \left\lceil \frac{2}{\Phi^2} \log\left( \frac{1}{2\e\sqrt{\pimin}} \right) \right\rceil.
\]
The evolving sets process of Morris and Peres \cite{MP05} gives nearly the same bound without assuming reversibility. Specifically, (\cite{LPW09}, Theorem 17.10)
\begin{equation} \label{Cheeger-lazy}
\tmix(\e) \leq \left\lceil \frac{2}{\Phi^2} \log\left( \frac{1}{\e\pimin} \right) \right\rceil
\end{equation}
assuming only that $P$ is lazy.

\subsubsection{Reversibilization}

If $P$ is not reversible but we still have $\pimin>0$, define the time reversal transition matrix
\[
P^\ast(x,y) = \frac{\pi(y)}{\pi(x)} P(y,x), \qquad x,y\in\X.
\]
$P^\ast$ also has stationary distribution $\pi$. Two important reversible matrices that can be generated using $P$ and $P^\ast$ are the additive reversibilization $\frac{P+P^\ast}{2}$ and the multiplicative reversibilization $P^\ast P$.

Many of the arguments discussed previously for reversible chains extend naturally to the context of nonreversible chains using one or the other reversibilization. The Dirichlet form $\EE$ and parity form $\FF$ are the same for $P$ and $\frac{P+P^\ast}{2}$. Therefore, path and comparison arguments applied to $P$ can bound the second-highest eigenvalue $b_{N-1}$ of $\frac{P+P^\ast}{2}$ away from 1 and the lowest eigenvalue $b_1$ away from $-1$. The eigenvalues $\beta_1,\ldots,\beta_{N-1}$ of $P$ satisfy $b_1 \leq \Real \beta_j \leq b_{N-1}$ (\cite{MT06}, Theorem 5.10). Hence the Dirichlet form of $P$ controls the eigenvalue distance from 1, and the parity form controls the eigenvalue distance from $-1$, but absent other hypotheses we have no lower bound on the absolute spectral gap.

A simple example is the non-random walk on $\Z/3\Z$ given by $P(i,i+1) = 1$, which does not converge at all. One can use the Dirichlet and parity forms to bound the eigenvalues away from 1 and $-1$, but the absolute spectral gap is zero since the eigenvalues are the cube roots of unity. This example is discussed in \cite{DGJM06}, which has much more on comparison techniques for nonreversible chains.

The Cheeger constant $\Phi$ is also the same for $P$ and $\frac{P+P^\ast}{2}$. As above, a lower bound on $\Phi$ implies that the eigenvalues of $P$ cannot be too close to 1, but there is no control over the absolute spectral gap.

In contrast with the additive reversibilization, the multiplicative reversibilization $P^\ast P$ provides a lower bound for the absolute spectral gap $1-\beta_\star$ of $P$. Let $\alpha$ be the square root of the second-highest eigenvalue of $P^\ast P$. Then Fill \cite{F91} proves using an identity of Mihail \cite{M89} that $\beta_\star \leq \alpha$. One can now apply Theorem \ref{main} to bound the mixing time, but the following analogue of the Proposition \ref{l2-basic} upper bound will be much tighter as long as $\pimin$ is reasonably far from zero:
\begin{equation} \label{l2-ext}
\tmix(\e) \leq \left\lceil \frac{1}{1-\alpha} \log\left( \frac{1}{2\e\sqrt{\pimin}} \right) \right\rceil.
\end{equation}
This is exactly the same as Proposition \ref{l2-basic} with $1/(1-\alpha)$ in place of $\trel = 1/(1-\beta_\star)$. It follows from Theorem 2.1 of \cite{F91}.

While the bound \eqref{l2-ext} is encouraging, in many instances $P$ converges much more quickly than $P^\ast P$. Often $P^\ast P$ does not converge at all and $\alpha=1$. 

Suppose $P(x,x)\geq\delta$ for all $x\in\X$. Then the additive and multiplicative reversibilizations are related by (\cite{MT06}, Remark 1.16; see also \cite{F91}, Lemma 2.4)
\[
\EE_{P^\ast P}(f,f) \geq 2\delta\EE_P(f,f) = 2\delta\EE_{\frac{P+P^\ast}{2}}(f,f).
\]
In this way, a Poincar\'e inequality or Cheeger bound for $P$ leads to a Poincar\'e inequality for $P^\ast P$, which in turn bounds the mixing time of $P$ via \eqref{l2-ext}. For instance, if $\Phi$ is the Cheeger constant for $P$, the second-highest eigenvalue $b_{N-1}$ of $\frac{P+P^\ast}{2}$ satisfies $1-b_{N-1} \geq \Phi^2/2$. Suppose $\delta=1/2$. Then for all $f:\X\to\R$,
\[
\Var_\pi(f) \leq \frac{2}{\Phi^2}\EE_{\frac{P+P^\ast}{2}}(f,f) \leq \frac{2}{\Phi^2}\EE_{P^\ast P}(f,f),
\]
implying that $\alpha \leq 1-\Phi^2/4$. It follows from \eqref{l2-ext} that
\[
\tmix(\e) \leq \left\lceil \frac{4}{\Phi^2} \log\left( \frac{1}{2\e\sqrt{\pimin}} \right) \right\rceil.
\]
This is the same result (up to a constant factor) as \eqref{Cheeger-lazy}, with the same hypotheses on $P$.

\subsubsection{Coupling}

Coupling is a powerful technique for bounding Markov chain mixing times. In principle (and usually in practice) it is completely independent of eigenvalue methods, since the fundamental coupling inequality (\cite{LPW09}, Theorem 5.2) provides a direct relationship with mixing time. There is a connection, though, in that couplings satisfying a contraction property give bounds on the absolute spectral gap $1-\beta_\star$.

Let $\bar{P}$ be a Markov transition matrix on $\X\times\X$ with the property that
\[
\bar{P}((x,y),(x',\X)) = P(x,x'), \qquad \bar{P}((x,y),(\X,y')) = P(y,y').
\]
The chain on $\X\times\X$ with transition matrix $\bar{P}$ is a Markovian coupling of the original chain with itself. Suppose $\rho$ is a metric on $\X$, and there is $\theta<1$ such that
\[
\sum_{x',y'\in\X} \bar{P}((x,y),(x',y')) \rho(x',y') \leq \theta \rho(x,y) \qquad \text{for all } x,y\in\X.
\]
Then it follows (\cite{C98}; \cite{LPW09}, Theorem 13.1) that $\beta_\star \leq \theta$.

Suppose that $\rho(x,y)\geq 1$ for all $x\neq y$; this can always be achieved by scaling $\rho$ if necessary. If $\diam\X = \max\{\rho(x,y)\st x,y\in\X\}$, one can show using the fundamental coupling inequality that
\begin{equation} \label{couple-bound}
\tmix(\e) \leq \left\lceil \frac{1}{1-\theta} \log\left( \frac{\diam\X}{\e} \right) \right\rceil.
\end{equation}
This is similar to the upper bound in Proposition \ref{l2-basic}, except that reversibility is not required and the $\pimin^{-1}$ is replaced by $\diam\X$. In practice, $\diam\X$ is usually smaller than $\pimin^{-1}$, sometimes much smaller.

The method of path coupling, due to Bubley and Dyer \cite{BD97}, introduces a graph structure on the state space $\X$. One chooses a set $E$ of undirected edges $(x,y)$ such that the vertex set $\X$ is connected. Then one defines a function $\ell:E\to [1,\infty)$ giving the length of each edge. The path metric $\rho$ on $\X$ is given by
\[
\rho(x,y) = \min_{x_0,x_1,\ldots,x_d\in\X} \left\{ \sum_{i=1}^d \ell(x_{i-1},x_i) \st x_0=x,\ x_d=y,\ \text{each } (x_{i-1},x_i)\in E \right\}.
\]
If there is $\theta<1$ such that
\[
\sum_{x',y'\in\X} \bar{P}((x,y),(x',y')) \rho(x',y') \leq \theta \rho(x,y) \qquad \text{for all } (x,y)\in E,
\]
then (\cite{BD97}; \cite{LPW09}, Corollary 14.7) $\beta_\star \leq \theta$ and \eqref{couple-bound} holds. The advantage is that one need only define $\bar{P}$ starting at pairs of states $x,y$ adjacent in the graph.

\subsection{Organization}

The rest of this note is organized as follows. In Section 2 we discuss several examples illustrating the sharpness or lack thereof of Theorems \ref{main}-\ref{sst} in different situations. Section 3 introduces the stochastic interpretation of eigenvalues developed by Diaconis, Fill, and Miclo. A slight extension of this theory allows us to prove Theorem \ref{sst}, from which Theorem \ref{rev-sharpen} follows. In Section 4 we state and prove the combinatorial result of Hou and Mu about companion matrices and Schur functions. Section 5 uses the result of Section 4 to prove Theorem \ref{main}.

\section{Examples}

\subsection{Pure birth chain}
\label{pure-birth-section}

Set $\X = \{1,\ldots,N\}$ and fix $0<\beta<1$. Define $P$ by $P(i,i)=\beta$ and $P(i,i+1)=1-\beta$ for $1\leq i\leq N-1$, and $P(N,N)=1$. This is the transition matrix in \eqref{pure-birth-matrix} with $\beta_1 = \cdots = \beta_{N-1} = \beta$.

State $N$ is absorbing, so $\pimin = 0$. The eigenvalues are 1 and $N-1$ copies of $\beta$ in a Jordan block, meaning that $\beta_\star = \beta$. The mixing time is of order $N/(1-\beta) = N\trel$. Theorem \ref{rev-sharpen} is sharp; indeed, the theorem is proved by showing that this chain is the slowest mixing among all chains in $\overline{\RRN}$ with nonnegative eigenvalues and an absolute spectral gap of $1-\beta$.

\subsection{Biased random walk on the path}
\label{biased-walk}

Set $\X = \{1,\ldots,N\}$ and fix probabilities $p,q,r$ such that $p+q+r=1$. For $1\leq i\leq N-1$, let $P(i,i-1)=p$, $P(i,i)=q$, and $P(i,i+1)=r$. At the endpoints, let $P(0,0)=p+q$ and $P(0,1)=r$, while $P(N,N-1)=p$ and $P(N,N)=q+r$. So $p$ is the probability of moving left, $q$ is the probability of staying still, and $r$ is the probability of moving right. The stationary distribution is given by $\pi(i) = \frac{1}{Z}(r/p)^i$, where $Z$ is an easily computed normalizing constant. Assume $r>p$.

The eigenvalues are 1 and $q + 2\sqrt{pr}\cos(j\pi/N)$ for $1\leq j\leq N-1$. (See for example the computation in \cite{P09}.) Suppose $p,q,r$ are fixed as $N\to\infty$. Then $\trel$ is constant in $N$ while $\pimin$ is exponentially small. Hence Proposition \ref{l2-basic} and Theorems \ref{main}-\ref{rev-sharpen} all give mixing time upper bounds of order $N$, matching the actual mixing time up to a constant factor.

If instead we keep $q$ constant and send $p\to 0$ as $N\to\infty$, $\pimin$ is super-exponentially small. For instance, if $p=1/N$ then $\log\pimin^{-1}$ is of order $N\log N$. The relaxation time remains constant, and the mixing time is still of order $N$. Thus, Theorems \ref{main}-\ref{rev-sharpen} give the correct upper bound while Proposition \ref{l2-basic} is too conservative. This is essentially a perturbation of Example 2.1.

\subsection{Random walk on the path with sticky endpoint}
\label{sticky-walk}

Set $\X = \{1,\ldots,N\}$. For $1\leq i\leq N-1$, let $P(i,i-1) = 1/4$, $P(i,i) = 1/2$, and $P(i,i+1) = 1/4$. At the endpoints, let $P(0,0) = 3/4$ and $P(0,1) = 1/4$, while $P(N,N-1) = 1/4(N-1)$ and $P(N,N) = 1-1/4(N-1)$. In other words, we modify the unbiased lazy random walk on the path so that $P(N,N-1)$ is very small, that is, the right endpoint is ``sticky.''

The stationary distribution is $\pi(i) = 1/2(N-1)$ for $1\leq i\leq N-1$ and $\pi(N) = 1/2$. The mixing time and absolute spectral gap can be computed up to constant factors using formulas of Chen and Saloff-Coste \cite{CSC13}. Their Theorem 1.1 implies that the mixing time is order $N^2$, and their Theorem 1.2 implies that the relaxation time is also order $N^2$.

The mixing time upper bound from Proposition \ref{l2-basic} is order $N^2\log N$. Theorem \ref{rev-sharpen} gives an upper bound of $N^3$. We now show using arguments from Section 3 that the Theorem \ref{sst} upper bound is sharp to within a factor of 2 even for total variation distance.

Let $Q$ be given as in \eqref{pure-birth-matrix}. Define the measure $\mu$ on $\{1,\ldots,N\}$ by $\mu(1) = 1$ and $\mu(i) = 0$ for $i>1$. This is the same as the measure $\delta_1$ defined in the proof of Proposition \ref{link}, except that we view $\delta_1$ as a measure on the state space of $Q$ and $\mu$ as a measure on the state space of $P$.

By Proposition \ref{link}, there is a stochastic link $\Lambda$ between $P$ and $Q$ such that the first row of $\Lambda$ is $\mu$, the $N$th row of $\Lambda$ is $\pi$, and $\Lambda P = Q \Lambda$. It follows that $\Lambda P^t = Q^t \Lambda$ for all $t\geq 0$. Let $\mu_j = \Lambda(j,\cdot)$ as in the proof of Proposition \ref{link}. For all $t\geq 0$,
\begin{equation} \label{Q-decomp}
P^t(1,\cdot) = \mu P^t = \delta_1 \Lambda P^t = \delta_1 Q^t \Lambda = \sum_{j=1}^N Q^t(1,j)\mu_j.
\end{equation}
Since $\mu_N = \pi$, we have $P^t(1,\cdot) \geq Q^t(1,N)\pi$, so
\begin{equation} \label{sst-example}
\|P^t(1,\cdot)-\pi\|_\TV \leq \dsep(P^t(1,\cdot),\pi) \leq 1-Q^t(1,N).
\end{equation}
Indeed, if $\tau$ is the strong stationary time for $P$ constructed according to strong stationary duality as in \cite{DF90}, then $\Prob(\tau>t) = 1-Q^t(1,N)$, so \eqref{sst-example} is a restatement of the bound in Theorem \ref{sst}.

Because $P$ is skip-free, that is, $P(i,k)=0$ if $k>i+1$, it follows from \eqref{mu_j} that each $\mu_j$ is supported on $\{1,\ldots,j\}$. In particular, $\mu_j(N) = 0$ for $j<N$. Now \eqref{Q-decomp} implies that $P^t(1,N) = Q^t(1,N)\pi(N)$, and
\[
\pi(N)-P^t(1,N) = \pi(N)(1-Q^t(1,N)) \geq \pi(N) \|P^t(1,\cdot)-\pi\|_\TV \geq \pi(N)[\pi(N)-P^t(1,N)].
\]
The first and last terms differ by a multiple of $\pi(N) = 1/2$. Thus the middle inequality, which is Theorem \ref{sst}, is sharp to within a factor of 2.

The argument given here applies to any birth and death chain with nonnegative eigenvalues. It shows that Theorem \ref{sst} is off by at most a factor of $\pi(N)$. This result is somewhat interesting because the Theorem \ref{sst} bound depends on the entire spectrum of the transition matrix, yet we proved sharpness without needing to compute or bound any eigenvalues (besides ensuring nonnegativity).

\subsection{Skip-free chain with uniform stationary distribution}
\label{skip-free}

Set $\X = \{1,\ldots,N\}$. For $1\leq i\leq N-1$, let $P(i,j) = 1/i(i+1)$ for $j\leq i$ and $P(i,i+1) = i/(i+1)$. At the right endpoint, let $P(N,j) = 1/N$ for all $j$. The stationary distribution $\pi$ is uniform. The eigenvalues are 1 and $N-1$ copies of 0 in a Jordan block. If $\mu_j$ is the uniform distribution on $\{1,\ldots,j\}$, $\mu_j(i) = 1/j$ for $1\leq i\leq j$, then $\mu_j P = \mu_{j+1}$. This definition is consistent with \eqref{mu_j}; even though $P$ is not reversible, the only part of Proposition \ref{link} requiring reversibility is the proof that the entries of the link $\Lambda$ are nonnegative, which is certainly true if $\beta_j = 0$ for all $1\leq j\leq N-1$.

The mixing time of the chain is order $N$. This shows that the hypothesis of reversibility in Proposition \ref{l2-basic} is necessary: the upper bound would be of order $\log N$. Theorem \ref{main} gives the order $N$ upper bound.

For fixed $0<\beta<1$, let $P_\beta = \beta I + (1-\beta)P$. The relaxation time of $P_\beta$ is $1/(1-\beta)$, and the mixing time is of order $N/(1-\beta) = N\trel$.

\subsection{Lazy random walk on the hypercube}
\label{hypercube}

Fix $n\geq 1$ and let $\X = (\Z / 2\Z)^n = \{(x_1,\ldots,x_n) \st x_i \in \{0,1\} \}$. Define $P$ by $P(x,x) = 1/2$ and $P(x,y) = 1/2n$ if $x,y\in\X$ differ in exactly one coordinate. The stationary distribution is uniform. The absolute spectral gap is $1/n$, so Proposition \ref{l2-basic} says that the mixing time is at least order $n$ and at most order $n^2$. It can be proved by coupling or diagonalization (see e.g. \cite{LPW09}) that the correct order is $n\log n$. Theorem \ref{rev-sharpen} gives an upper bound of $n\cdot 2^n$, which is extremely conservative. Theorem \ref{sst} only improves the bound to $2^n$.


\section{Stochastic interpretation of eigenvalues}

The exposition in this section closely follows that of Fill \cite{F09a} and Miclo \cite{M10}. The main difference is that our result holds for reversible chains with arbitrary stationary distribution, while Fill considers only birth and death chains and Miclo considers reversible chains with an absorbing state. The restrictions allow those authors to conclude that the strong stationary times they construct are optimal, but that does not concern us here.

\begin{prop} \label{link}
Let $P$ be a Markov transition matrix on a state space $\X$ of size $N$. Suppose $P\in\overline{\RRN}$ is reversible with respect to $\pi$ and its eigenvalues $0 \leq \beta_1 \leq \cdots \leq \beta_{N-1} < \beta_N = 1$ are nonnegative. Let $\mu$ be any probability distribution on $\X$, and define the transition matrix $Q$ on $\{1,\ldots,N\}$ by \eqref{pure-birth-matrix}. Then there is a stochastic link $\Lambda$ with rows indexed by $\{1,\ldots,N\}$ and columns indexed by $\X$ such that the first row of $\Lambda$ is $\mu$, the $N$th row of $\Lambda$ is $\pi$, and $\Lambda P = Q \Lambda$.
\end{prop}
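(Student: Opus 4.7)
The plan is to build $\Lambda$ row by row, guided by the intertwining relation itself. Write the rows as $\mu_1,\ldots,\mu_N$. Because $Q$ is bidiagonal, $\Lambda P = Q\Lambda$ unpacks into
\[
\mu_j P = \beta_j\mu_j + (1-\beta_j)\mu_{j+1} \quad (1\leq j\leq N-1), \qquad \mu_N P = \mu_N.
\]
Setting $\mu_1 = \mu$ and solving the first family of equations forward uniquely specifies
\[
\mu_{j+1} = \mu\,\prod_{i=1}^j \frac{P-\beta_i I}{1-\beta_i}.
\]
The remaining tasks are to check that each $\mu_j$ is a probability measure and that $\mu_N = \pi$.

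The row sums take care of themselves by induction: if $\mu_j\mathbf{1}=1$, then by stochasticity of $P$, $\mu_{j+1}\mathbf{1} = (\mu_j P\mathbf{1} - \beta_j\mu_j\mathbf{1})/(1-\beta_j) = 1$. For $\mu_N=\pi$, Cayley--Hamilton gives $\prod_{k=1}^N(P-\beta_k I) = 0$, so the image of the left action of $\prod_{i=1}^{N-1}(P-\beta_i I)$ lies in the left $1$-eigenspace of $P$. Since $\beta_{N-1}<1=\beta_N$, that eigenspace is one-dimensional and spanned by $\pi$; the total-mass-$1$ normalization then forces $\mu_N=\pi$.

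Entrywise nonnegativity of $\mu_j$ is the main obstacle. I would adapt the strong stationary duality framework of Fill \cite{F09a} and Miclo \cite{M10} to endow $\mu_j$ with a probabilistic interpretation, namely $\mu_j = \Prob(X_t \in \cdot \cond Y_t = j)$ for an explicit Markovian coupling of $(X_t)$ (with $X_0\sim\mu$, dynamics $P$) and a pure-birth dual chain $(Y_t)$ on $\{1,\ldots,N\}$ with dynamics $Q$. The coupling is constructed inductively: at each step, the joint transition kernel from $(x,j)$ is required to be consistent with $P$ on the $X$-marginal and $Q$ on the $Y$-marginal, and reversibility of $P$ together with $\beta_j \geq 0$ are precisely the hypotheses that force the resulting conditional probabilities to be nonnegative. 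Given such a coupling, nonnegativity of $\mu_j$ is automatic, since it is a conditional law. The novelty relative to \cite{F09a} (which handles birth-and-death chains) and \cite{M10} (reversible chains with an absorbing state) is that the starting distribution $\mu$ --- the first row of $\Lambda$ --- is arbitrary, so the coupling construction must be flexible enough to accommodate any initial $\mu$.

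Finally, to pass from $\RRN$ (where this coupling construction naturally lives) to the full closure $\overline{\RRN}$, I would approximate $P$ by a sequence $P_n \in \RRN$ having nonnegative eigenvalues converging to those of $P$. Continuity of the recursion in $P$ (possible because $\beta_j < 1$ for $j < N$) together with closedness of the conditions ``$\Lambda$ is stochastic'' and ``$\Lambda P = Q\Lambda$'' lets the conclusion pass to the limit.
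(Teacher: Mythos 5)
Your construction of the rows $\mu_j$ via the product formula, the mass-one induction, the Cayley--Hamilton argument showing $\mu_N=\pi$, and the limiting argument from $\RRN$ to $\overline{\RRN}$ all match the paper's proof. The genuine gap is in the one step that carries the real weight of the proposition: entrywise nonnegativity of the $\mu_j$. You propose to obtain it by building a Markovian coupling of $(X_t)$ with a pure-birth dual $(Y_t)$ and then reading off $\mu_j = \Prob(X_t\in\cdot\cond Y_t=j)$ as a conditional law. But the Diaconis--Fill duality machinery runs in the opposite direction: the bivariate transition kernel of the coupled chain is defined by formulas whose nonnegativity is exactly the statement that $\Lambda$ is a stochastic link. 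One must first prove that the measures produced by the recursion are nonnegative, and only then does the coupling (and hence the strong stationary time) exist. Saying that ``reversibility together with $\beta_j\geq 0$ are precisely the hypotheses that force the conditional probabilities to be nonnegative'' restates the conclusion rather than proving it.

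The missing ingredient is a linear-algebra fact, not a probabilistic one. The paper, following Lemma 4.1 of \cite{F09a}, invokes the theorem of Micchelli and Willoughby \cite{MW79}: for a symmetrizable nonnegative matrix with real eigenvalues listed in increasing order, the partial products $\prod_{i=1}^{j-1}(P-\beta_i I)$ have nonnegative entries; applied on the left to the nonnegative row vector $\mu$ and divided by $\prod_{i=1}^{j-1}(1-\beta_i)>0$, this gives $\mu_j\geq 0$. This is genuinely where reversibility enters --- the paper points out that there exist nonreversible $P$ with nonnegative real eigenvalues for which $\Lambda$ has negative entries --- so no argument using only the spectrum and the recursion can close the gap; you need this (or an equivalent) total-nonnegativity input before any duality or coupling can be set up.
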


\begin{proof}
We need only prove the proposition for $P\in\RRN$. If $P\in\overline{\RRN}$ but not $\RRN$, write $P$ as a limit of matrices in $\RRN$ and take limits throughout the proof.

Set $\Lambda(1,\cdot) = \mu$ and for $2\leq j\leq N$,
\begin{equation} \label{lambda-def}
\Lambda(j,\cdot) = \mu \cdot \prod_{i=1}^{j-1} \frac{P-\beta_i I}{1-\beta_i}.
\end{equation}
Let $\mu_j = \Lambda(j,\cdot)$. These are the ``local equilibria'' of Miclo \cite{M10}. For $1\leq j\leq N-1$, \eqref{lambda-def} implies that
\begin{equation} \label{mu_j}
\mu_j P = \beta_j \mu_j + (1-\beta_j) \mu_{j+1}.
\end{equation}
Equivalently, if $\delta_j$ is the measure on $\{1,\ldots,N\}$ given by $\delta_j(i) = 1$ if $i=j$ and 0 otherwise, we have $\delta_j \Lambda P = \delta_j Q \Lambda$.

When $j=N$, we use that $\mu_N = \pi$. This is proved as follows. Using induction on \eqref{mu_j}, each $\mu_j$ has total mass $\mu_j(\X) = 1$. Multiplying \eqref{lambda-def} on the right by $P-\beta_N I = P-I$ when $j=N$, the Cayley-Hamilton Theorem implies that $\mu_N(P-I) = 0$. Since the eigenvalue 1 has multiplicity 1, the left eigenspace of $P$ corresponding to the eigenvalue 1 is one-dimensional, so $\mu_N$ is a scalar multiple of $\pi$. We have $\mu_N(\X) = 1$, so $\mu_N = \pi$.

Now, $\mu_N P = \mu_N$, so $\delta_N \Lambda P = \delta_N Q \Lambda$. We have proved that $\Lambda P = Q\Lambda$.

All that remains is to prove the entries of $\Lambda$ are nonnegative. This follows as in Lemma 4.1 of \cite{F09a} from a linear algebra result of Micchelli and Willoughby \cite{MW79}. This is the only place we use reversibility (but there are examples of nonreversible $P$ with nonnegative real eigenvalues for which $\Lambda$ has negative entries).
\end{proof}

\begin{proof}[Proof of Theorem \ref{sst}]
Essentially, Theorem \ref{sst} follows from the reasoning in the last paragraph of Section 1.2. Here we explicitly construct the $\tau_j$ in the statement of the theorem.

As in the last paragraph of Section 1.2, let $(Y_t: t\geq 0)$ be the Markov chain on $\{1,\ldots,N\}$ with initial value $Y_0 = 1$ and transition matrix $Q$. For $1\leq j\leq N$, set $T_j = \min\{t\geq 0 \st Y_t = j\}$. Then $T_1 = 0$ and for $j\geq 1$, $T_{j+1}-T_j$ is independent of $T_j$ and distributed geometrically: $\Prob(T_{j+1}-T_j = t) = \beta_j^{t-1}(1-\beta_j)$ for $t\geq 1$. Set $\tau_j = T_{j+1}-T_j$. This definition combined with the reasoning in Section 1.2 proves Theorem \ref{sst}.
\end{proof}

\begin{proof}[Proof of Theorem \ref{rev-sharpen}]
Assume first that the eigenvalues of $P$ are all nonnegative. Applying Theorem \ref{sst} and using that total variation distance is bounded by separation distance, we have that for all $x\in\X$ and $t\geq 0$,
\[
\|P^t(x,\cdot) - \pi\|_\TV \leq \Prob(\tau_1 + \cdots + \tau_{N-1} > t)
\]
where the $\tau_j$ are independent and stochastically dominated by iid geometric random variables $\eta_j$ satisfying $\Prob(\eta_j = t) = \beta_\star^{t-1}(1-\beta_\star)$ for $t\geq 1$. The sum $\eta = \eta_1 + \cdots + \eta_{N-1}$ is a negative binomial random variable. If $Y \sim \Binom(t,1-\beta_\star)$, then $\Prob(\eta > t) = \Prob(Y \leq N-2)$. Theorem \ref{rev-sharpen} follows from applying a standard Chernoff bound to $Y$. For $\delta\geq 0$,
\begin{equation} \label{Chernoff}
\Prob(Y \leq (1-\delta)\cdot (1-\beta_\star)t) \leq \exp\left( \frac{-\delta^2\cdot (1-\beta_\star)t}{2} \right).
\end{equation}
Choose $\delta$ so that $(1-\delta)(1-\beta_\star)t = N-2$. Then the right side of \eqref{Chernoff} is less than or equal to $\e$ exactly when
\begin{equation} \label{quadratic}
[(1-\beta_\star)t]^2 - 2(N-2+\log\e^{-1})[(1-\beta_\star)t] + (N-2)^2 \geq 0.
\end{equation}
The left side of \eqref{quadratic} is a quadratic function of $(1-\beta_\star)t$, so it is positive when $(1-\beta_\star)t$ is greater than the higher root given by the quadratic formula. In other words, when
\begin{equation} \label{quadratic-2}
(1-\beta_\star)t \geq N-2+\log\e^{-1} + \sqrt{2(N-2)\log\e^{-1} + (\log\e^{-1})^2},
\end{equation}
then \eqref{quadratic} holds. In addition, for such $t$ we have $\delta\geq 0$, so \eqref{Chernoff} is valid. Using that $\sqrt{a+b} \leq \sqrt{a}+\sqrt{b}$ for $a,b\geq 0$, we see that if
\begin{equation} \label{final-bound}
t \geq \trel\left[ N-2 + \log\e^{-1} + \sqrt{2(N-2)\log\e^{-1}} + \sqrt{(\log\e^{-1})^2} \right],
\end{equation}
then \eqref{quadratic-2} is true, meaning that
\[
\|P^t(x,\cdot)-\pi\|_\TV \leq \Prob(\eta>t) = \Prob(Y\leq N-2) \leq \exp\left( \frac{-\delta^2\cdot (1-\beta_\star)t}{2} \right) \leq \e.
\]
Hence $\tmix(\e)$ is at most the right side of \eqref{final-bound}, plus 1 because $\tmix(\e)$ must be an integer. This proves Theorem \ref{rev-sharpen} when the eigenvalues of $P$ are nonnegative.

If $P$ has negative eigenvalues, we apply the preceding argument to $P^2$, which has absolute spectral gap $1-\beta_\star^2$. The equivalent of \eqref{final-bound} is that if
\[
t \geq \frac{1}{1-\beta_\star^2} \left[ N-2 + 2\log\e^{-1} + \sqrt{2(N-2)\log\e^{-1}} \right],
\]
then $\|P^{2t}(x,\cdot)-\pi\|_\TV \leq \e$. Thus,
\[
\tmix(\e) \leq 2 \left\lceil \frac{\trel}{1+\beta_\star} \left[ N-2 + 2\log\e^{-1} + \sqrt{2(N-2)\log\e^{-1}} \right] \right\rceil,
\]
which proves Theorem \ref{rev-sharpen} using that $\beta_\star \geq 0$.
\end{proof}

\section{Powers of the companion matrix}

In this section, we show that if $C$ is the companion matrix for a monic polynomial $g(x)$, then the entries of powers of $C$ are certain Schur polynomials in the roots of $g$. This result will be used in Section 5 to prove Theorem \ref{main}.

In previous work, Chen and Louck \cite{CL96} obtained a formula for the entries of powers of $C$ in terms of the coefficients of $g$. That formula is not well-suited for our purposes. Hou and Mu, in a remark at the end of \cite{HM03}, provide a formula that is equivalent to the main result of this section. The proof given here is self-contained and direct.

We will use the definition of Schur polynomials given in terms of semistandard Young tableaux. Fix a positive integer $m$, and let $k_1 \geq k_2 \geq \cdots \geq k_\ell \geq 1$ be a weakly decreasing sequence of positive integers with $\ell \leq m$. A semistandard Young tableau of shape $(k_1,\ldots,k_\ell)$ on the alphabet $\{1,\ldots,m\}$ is a list of tuples $\{(a_{11},a_{12},\ldots,a_{1k_1}); (a_{21},a_{22},\ldots,a_{2k_2}); \ldots; (a_{\ell 1},a_{\ell 2},\ldots,a_{\ell k_\ell})\}$ such that each $a_{ij} \in \{1,\ldots,m\}$; each row $(a_{i1},a_{i2},\ldots,a_{ik_i})$ is a weakly increasing sequence; and each column $(a_{1j},a_{2j},\ldots,a_{\ell_j j})$ is a strictly increasing sequence. Here $\ell_j = \max\{1\leq i\leq\ell \st j\leq k_i\}$. The weight $w(T)$ of a semistandard Young tableau $T$ on the alphabet $\{1,\ldots,m\}$ is the $m$-tuple $(w_1,\ldots,w_m)$, where each $w_k = \#\{(i,j) \st a_{ij} = k\}$.

Given the positive integer $m$ and the weakly decreasing sequence $(k_1,\ldots,k_\ell)$, define the Schur polynomial
\[
s_{(k_1,\ldots,k_\ell)}(x_1,\ldots,x_m) = \sum_{\substack{\text{semistandard Young tableaux $T$} \\ \text{of shape $(k_1,\ldots,k_\ell)$} \\ \text{on the alphabet $\{1,\ldots,m\}$}}} x^{w(T)},
\]
where if $w = (w_1,\ldots,w_m)$, then $x^w$ is shorthand notation for $x_1^{w_1}\cdots x_m^{w_m}$.

As an example, there are three semistandard Young tableaux of shape $(2,2,1)$ on the alphabet $\{1,2,3\}$. They are: $\{(1,1); (2,2); (3)\}$, $\{(1,1); (2,3); (3)\}$, and $\{(1,2); (2,3); (3)\}$. They can be graphically represented as follows.
\[
\young(11,22,3) \qquad \young(11,23,3) \qquad \young(12,23,3)
\]
The associated Schur polynomial is
\[
s_{(2,2,1)}(x_1,x_2,x_3) = x_1^2 x_2^2 x_3 + x_1^2 x_2 x_3^2 + x_1 x_2^2 x_3^2.
\]
It is well-known that all Schur polynomials are symmetric, and as such, they can be written in terms of the elementary symmetric polynomials. For $1\leq k\leq m$, the elementary symmetric polynomial $e_k$ in $m$ variables is given by
\[
e_k = e_k(x_1,\ldots,x_m) = \sum_{1\leq i_1 < \cdots < i_k \leq m} x_{i_1} \cdots x_{i_k}.
\]
In the above example,
\[
s_{(2,2,1)}(x_1,x_2,x_3) = (x_1x_2 + x_1x_3 + x_2x_3)x_1x_2x_3 = e_2 e_3.
\]

Let $g(x)$ be a monic polynomial of degree $m$ with roots $x_1,\ldots,x_m$. Then the coefficients of $g$ are elementary symmetric polynomials in the roots $x_1,\ldots,x_m$: $g(x) = x^m - e_1x^{m-1} + e_2x^{m-2} - \cdots + (-1)^m e_m$. The companion matrix of $g$ is given by
\[
C = 
\begin{bmatrix}
0 & 0 & \cdots & \cdots & 0 & (-1)^{m-1}e_m \\
1 & 0 & 0 & \cdots & 0 & (-1)^{m-2}e_{m-1} \\
0 & 1 & 0 & \cdots & 0 & (-1)^{m-3}e_{m-2} \\
0 & 0 & 1 & \cdots & 0 & (-1)^{m-4}e_{m-3} \\
\vdots & \vdots & \vdots & \ddots & \vdots & \vdots \\
0 & 0 & 0 & \cdots & 1 & e_1
\end{bmatrix}.
\]
For $1\leq j\leq m-1$, $C(i,j) = 1$ if $i = j+1$ and 0 otherwise. In the last column, $C(i,m) = (-1)^{m-i}e_{m-i+1}$.

We can now state the main theorem of this section.
\begin{thm} \label{companion}
Let $g(x)$ be a monic polynomial of degree $m$, and let $C$ be its companion matrix. Suppose the roots of $g$ are $x_1,\ldots,x_m$. For $t\geq 0$, if $t+j-m \geq 1$, then
\[
C^t(i,j) = (-1)^{m-i} s_{(t+j-m,\underbrace{\scriptstyle 1,1,\ldots,1}_{m-i})}(x_1,\ldots,x_m).
\]
(The notation $(t+j-m,\underbrace{1,1,\ldots,1}_{m-i})$ means $t+j-m$ followed by a sequence of $m-i$ ones.) If $t+j-m < 1$, then
\[
C^t(i,j) = \begin{cases} 1 & \text{if } i = t+j, \\ 0 & \text{otherwise.} \end{cases}
\]
\end{thm}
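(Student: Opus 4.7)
The plan is first to reduce to computing the last column of $C^t$, then to establish the Schur polynomial formula for that column by induction on $t$ using Pieri's rule.

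For the reduction, I would exploit that for $1 \leq j \leq m-1$, the $j$-th column of $C$ is the standard basis vector $e_{j+1}$, which gives the shift identity $C^{t+1}(i, j) = C^t(i, j+1)$. Iterating: if $t + j \leq m$, one descends all the way to $C^0(i, j+t) = \delta_{i, j+t}$, which matches the ``otherwise'' branch of the theorem statement; if $t + j \geq m$, one reaches $C^{t+j-m}(i, m)$ instead. This reduces matters to proving
\begin{equation*}
C^s(i, m) = (-1)^{m-i}\, s_{(s, 1^{m-i})}(x_1, \ldots, x_m) \quad \text{for all } s \geq 1 \text{ and } 1 \leq i \leq m.
\end{equation*}

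The base case $s = 1$ is immediate, since $C(i, m) = (-1)^{m-i} e_{m-i+1}$ and $s_{(1^{m-i+1})} = e_{m-i+1}$. For the inductive step, I would expand $C^{s+1}(i, m) = \sum_k C(i, k)\, C^s(k, m)$. Because $C(i, k) = \delta_{k, i-1}$ for $k \leq m-1$, for $i \geq 2$ only $k = i-1$ and $k = m$ contribute, so using the inductive hypothesis together with the fact that $C^s(m, m) = s_{(s)} = h_s$ yields
\begin{equation*}
C^{s+1}(i, m) = (-1)^{m-i}\bigl[e_{m-i+1}\, h_s - s_{(s, 1^{m-i+1})}\bigr].
\end{equation*}

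I would close the induction with Pieri's rule: the horizontal strips of size $s \geq 1$ addable to the single column $(1^{m-i+1})$ yield exactly the shapes $(s+1, 1^{m-i})$ and $(s, 1^{m-i+1})$, giving $e_{m-i+1}\, h_s = s_{(s+1, 1^{m-i})} + s_{(s, 1^{m-i+1})}$; substitution then delivers the desired formula $C^{s+1}(i, m) = (-1)^{m-i}\, s_{(s+1, 1^{m-i})}$. The edge case $i = 1$ works automatically, since the shape $(s, 1^m)$ has $m+1$ rows and therefore vanishes as a polynomial in $m$ variables, so $e_m h_s = s_{(s+1, 1^{m-1})}$, matching the direct computation $C^{s+1}(1, m) = (-1)^{m-1} e_m h_s$. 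I expect the main obstacle to be justifying the Pieri enumeration cleanly, in particular verifying both that no other shapes arise as horizontal strips above a single column and that the vanishing behavior at $i = 1$ aligns exactly with the row-count constraint.
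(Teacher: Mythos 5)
Your proof is correct and is essentially the paper's argument: the same induction driven by the recursion $C^{s+1}(i,m)=C^{s}(i-1,m)+(-1)^{m-i}e_{m-i+1}C^{s}(m,m)$, closed by the same special case of Pieri's rule $e_\ell\, h_s = s_{(s,1^\ell)}+s_{(s+1,1^{\ell-1})}$ (the paper's Lemma 4.2, which it verifies by a direct bijective expansion, including the vanishing of $s_{(s,1^m)}$ in $m$ variables when $\ell=m$). The only difference is organizational: you first reduce to the last column via the shift identity $C^{t+1}(i,j)=C^t(i,j+1)$, whereas the paper runs one induction over all entries with a three-way case split on the sign of $t+j-m$; your reduction is a clean way to dispose of the $t+j-m<1$ cases.
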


The proof of Theorem \ref{companion} will use the following lemma, which is a special case of Pi\`eri's Rule.
\begin{lemma} \label{schur-induction}
Fix $m\geq 1$. For any $k\geq 1$ and any $1\leq\ell\leq m$,
\[
s_{(k,\underbrace{\scriptstyle 1,1,\ldots,1}_\ell)} + s_{(k+1,\underbrace{\scriptstyle 1,1,\ldots,1}_{\ell-1})} = s_{(k)} \cdot e_\ell.
\]
Here, each Schur polynomial and elementary symmetric polynomial is in the variables $x_1,\ldots,x_m$. When $\ell=m$, the first term of the left hand side is taken to be zero.
\end{lemma}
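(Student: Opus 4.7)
The plan is to prove this Pi\`eri-type identity by a direct weight-preserving bijection. After expansion as a sum of monomials $x_1^{w_1}\cdots x_m^{w_m}$, the right-hand side $s_{(k)}\cdot e_\ell$ enumerates pairs $(R,C)$, where $R=(r_1\le\cdots\le r_k)$ is a weakly increasing sequence and $C=(c_1<\cdots<c_\ell)$ is a strictly increasing sequence, both drawn from $\{1,\ldots,m\}$, each pair weighted by the product of its entries. The left-hand side enumerates semistandard Young tableaux of the hook shapes $(k,1^\ell)$ and $(k+1,1^{\ell-1})$ on the same alphabet. It therefore suffices to partition the pairs $(R,C)$ into two classes that biject, weight-preservingly, with the tableaux of these two shapes.

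The split is by comparing $r_1$ to $c_1$. If $r_1<c_1$, I map $(R,C)$ to the tableau of shape $(k,1^\ell)$ whose first row is $R$ and whose entries below the top-left cell, read downward, are $c_1,c_2,\ldots,c_\ell$; the first column $(r_1,c_1,\ldots,c_\ell)$ is strictly increasing exactly because $r_1<c_1$. If $r_1\ge c_1$, I map $(R,C)$ to the tableau of shape $(k+1,1^{\ell-1})$ whose first row is $(c_1,r_1,\ldots,r_k)$, weakly increasing since $c_1\le r_1$, and whose entries below the top-left cell are $c_2,\ldots,c_\ell$; the first column $(c_1,c_2,\ldots,c_\ell)$ is strictly increasing. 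In each case the resulting tableau carries the same monomial as the pair $(R,C)$.

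The inverse is transparent. From a tableau of shape $(k,1^\ell)$ I take $R$ to be the first row and $C$ to consist of the entries strictly below the top-left; strict increase of the first column forces $r_1<c_1$, landing in the first case. From a tableau of shape $(k+1,1^{\ell-1})$ I take $c_1$ to be the top-left entry, $R$ to be the remainder of the first row, and $C$ to be $c_1$ together with the entries below the top-left; weak increase of the first row forces $c_1\le r_1$, landing in the second case. Both assignments are mutual inverses, and both are weight-preserving.

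One boundary point deserves attention: when $\ell=m$, every strictly increasing $C$ of length $m$ in $\{1,\ldots,m\}$ starts at $c_1=1\le r_1$, so every pair lands in the second case. Correspondingly, no tableau of shape $(k,1^m)$ on an alphabet of size $m$ exists, since the first column would demand $m+1$ strictly increasing entries; this matches the convention that the first term on the left vanishes. I do not foresee a genuine obstacle in carrying out the argument; the content is entirely in choosing the case split so that the two hook shapes arise naturally, after which well-definedness and bijectivity are immediate from the definitions of the row-weak and column-strict conditions.
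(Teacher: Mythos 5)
Your bijection is correct and is essentially the same argument as the paper's: the paper writes each of $s_{(k,1^\ell)}$, $s_{(k+1,1^{\ell-1})}$, and $s_{(k)}\cdot e_\ell$ as explicit sums over pairs of a weakly increasing $k$-sequence and a strictly increasing $\ell$-sequence, with the two tableau shapes corresponding exactly to your case split $a_1<b_1$ versus $a_1\geq b_1$. Your treatment of the $\ell=m$ boundary case also matches the paper's convention.
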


\begin{proof}
We have
\begin{align*}
s_{(k,\underbrace{\scriptstyle 1,1,\ldots,1}_\ell)} &= \sum_{\substack{1\leq a_1 \leq a_2 \cdots \leq a_k \leq m \\ 1\leq b_1 < b_2 < \cdots < b_\ell \leq m \\ a_1 < b_1}} x_{a_1} \cdots x_{a_k} \cdot x_{b_1} \cdots x_{b_\ell}, \\
s_{(k+1,\underbrace{\scriptstyle 1,1,\ldots,1}_{\ell-1})} &= \sum_{\substack{1\leq a_1 \leq a_2 \cdots \leq a_k \leq m \\ 1\leq b_1 < b_2 < \cdots < b_\ell \leq m \\ a_1 \geq b_1}} x_{a_1} \cdots x_{a_k} \cdot x_{b_1} \cdots x_{b_\ell}.
\end{align*}
Meanwhile,
\[
s_{(k)} \cdot e_\ell = \sum_{\substack{1\leq a_1 \leq a_2 \cdots \leq a_k \leq m \\ 1\leq b_1 < b_2 < \cdots < b_\ell \leq m}} x_{a_1} \cdots x_{a_k} \cdot x_{b_1} \cdots x_{b_\ell}.
\]
Thus, the left and right hand sides are equal.
\end{proof}

\begin{proof}[Proof of Theorem \ref{companion}]
The proof is by induction on $t$. For $t=0$, the theorem is trivially true. Suppose the theorem is true for $t$. For all $1\leq i,j \leq m$,
\[
C^{t+1}(i,j) = \sum_{k=1}^m C(i,k)C^t(k,j) = C^t(i-1,j) + (-1)^{m-i}e_{m-i+1} C^t(m,j),
\]
where we take $C^t(i-1,j) = 0$ when $i=1$.

We split into three cases depending on the sign of $t+j-m$. If $t+j-m \geq 1$, then
\begin{align*}
C^t(i-1,j) &= (-1)^{m-i+1} s_{(t+j-m,\underbrace{\scriptstyle 1,1,\ldots,1}_{m-i+1})}, \\
C^t(m,j) &= s_{(t+j-m)}.
\end{align*}
Hence
\[
C^{t+1}(i,j) = (-1)^{m-i+1} s_{(t+j-m,\underbrace{\scriptstyle 1,1,\ldots,1}_{m-i+1})} + (-1)^{m-i}e_{m-i+1} s_{(t+j-m)}.
\]
Applying Lemma \ref{schur-induction} with $k = t+j-m$ and $\ell = m-i+1$, we conclude that
\[
C^{t+1}(i,j) = (-1)^{m-i} s_{(t+j-m+1,\underbrace{\scriptstyle 1,1,\ldots,1}_{m-i})},
\]
which completes the induction.

If $t+j-m = 0$, then $C^t(i-1,j) = 0$ and $C^t(m,j) = 1$, so
\[
C^{t+1}(i,j) = (-1)^{m-i}e_{m-i+1} = (-1)^{m-i} s_{(t+j-m+1,\underbrace{\scriptstyle 1,1,\ldots,1}_{m-i})},
\]
as desired. If $t+j-m \leq -1$, then $C^t(m,j) = 0$, so
\[
C^{t+1}(i,j) = C^t(i-1,j) = \begin{cases} 1 & \text{if } i-1 = t+j, \\ 0 & \text{otherwise} \end{cases} = \begin{cases} 1 & \text{if } i = t+1+j, \\ 0 & \text{otherwise,} \end{cases}
\]
as desired. This completes the proof.
\end{proof}

\section{Proof of Theorem \ref{main}}

The first step in proving Theorem \ref{main} is establishing the recurrence relation for the sequence $(P^t - \Pi \st t\geq 0)$.

\begin{prop} \label{cayley-hamilton}
Let $P$ be a Markov transition matrix on a state space $\X$ of size $N$. Suppose the eigenvalues $\beta_1,\ldots,\beta_N = 1$ satisfy $\beta_j\neq 1$ for $j<N$. Denote by $\pi$ the stationary distribution for $P$, and define the $N\times N$ matrix $\Pi$ by $\Pi(x,y) = \pi(y)$ for all $x,y\in\X$. Let $e_1,\ldots,e_{N-1}$ be the elementary symmetric polynomials in the eigenvalues $\beta_1,\ldots,\beta_{N-1}$, and let
\[
g(x) = (x-\beta_1)\cdots(x-\beta_{N-1}) = x^{N-1} - e_1 x^{N-2} + e_2 x^{N-3} - \cdots + (-1)^{N-1}e_{N-1}.
\]
Then
\[
(P^{N-1} - \Pi) - e_1(P^{N-2} - \Pi) + e_2(P^{N-3} - \Pi) - \cdots + (-1)^{N-1}e_{N-1}(I - \Pi) = 0.
\]
\end{prop}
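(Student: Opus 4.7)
The whole calculation becomes transparent once the claim is rewritten in closed form. The coefficient on $P^k$ in the proposed identity is $(-1)^{N-1-k}e_{N-1-k}$, which is exactly the coefficient of $x^k$ in $g(x)$, while the coefficients on $-\Pi$ sum to $-\sum_{k=0}^{N-1}(-1)^{N-1-k}e_{N-1-k} = -g(1)$ (just substitute $x=1$ into $g$). So the proposition asserts nothing other than
\[
g(P) = g(1)\,\Pi,
\]
and this is the compact identity I would set out to prove.

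The engine of the argument is Cayley--Hamilton. Because the eigenvalues of $P$ are $\beta_1,\dots,\beta_{N-1},1$, its characteristic polynomial is $(x-1)g(x)$, so $(P-I)g(P)=0$ and hence $Pg(P)=g(P)$. Every column of $g(P)$ therefore sits in the right $1$-eigenspace of $P$. The hypothesis $\beta_j\neq 1$ for $j<N$ makes $1$ a simple eigenvalue, so this eigenspace has dimension one and is spanned by $\1$. Consequently $g(P)$ has constant columns: $g(P)(x,y) = v(y)$ for some function $v$ on $\X$.

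To pin down $v$, I would multiply $g(P)$ on the left by $\pi$. Since $\pi P = \pi$, we have $\pi P^k = \pi$ for every $k\geq 0$, and therefore $\pi g(P) = g(1)\pi$. Computing the same product directly from the column structure gives $(\pi g(P))(y) = \sum_x \pi(x) v(y) = v(y)$, so $v(y) = g(1)\pi(y)$ and $g(P) = g(1)\Pi$, which is the reformulated claim.

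There is no deep obstacle here; the only step that requires care is the appeal to simplicity of the eigenvalue $1$, which is where the hypothesis $\beta_j\neq 1$ for $j<N$ enters in an essential way. Without that hypothesis, $g(P)$ could fail to collapse onto the line spanned by $\1$, the function $v$ would not be forced to be a scalar multiple of $\pi$, and the crisp identity $g(P)=g(1)\Pi$ would no longer hold (indeed $g(1)$ itself would vanish).
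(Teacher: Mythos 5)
Your proof is correct and rests on the same two pillars as the paper's: the Cayley--Hamilton identity $(P-I)g(P)=0$ and the simplicity of the eigenvalue $1$. The paper packages the argument slightly differently --- it proves $g(P-\Pi)=(-1)^{N-1}e_{N-1}\Pi$ via the identity $(P-\Pi)^t=P^t-\Pi$ and a decomposition of vectors into $\pi$-mean-zero and constant parts --- but your reformulation $g(P)=g(1)\Pi$ and the column-by-column argument are an equivalent, if anything cleaner, route to the same conclusion.
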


\begin{proof}
First note that for all $t\geq 1$, $(P - \Pi)^t = P^t - \Pi$. This can be proved by induction on $t$. For $t=1$, it is certainly true. If it holds for $t$, then
\[
\begin{split}
(P - \Pi)^{t+1} = (P - \Pi)(P^t - \Pi) &= P^{t+1} - \Pi P^t - P^t \Pi + \Pi^2 \\
&= P^{t+1} - \Pi - \Pi + \Pi = P^{t+1} - \Pi,
\end{split}
\]
completing the proof.

We will show that $g(P - \Pi) = (-1)^{N-1}e_{N-1}\Pi$, which is equivalent to the desired statement. Suppose that $v$ is a column vector with $\pi v = 0$. The characteristic polynomial of $P$ is $(x-1)g(x)$, so $(P-I)g(P) = 0$ by the Cayley-Hamilton Theorem. Hence $g(P)v$ is in the kernel of $P-I$, so it must be a constant multiple of the eigenvector $\1$ of all ones. Write $g(P)v = c\1$. Then $\pi g(P)v = c$. But also,
\[
\pi g(P) = (1-\beta_1)\cdots(1-\beta_{N-1})\pi,
\]
so $\pi g(P)v = (1-\beta_1)\cdots(1-\beta_{N-1})\pi v = 0$. Thus, $c=0$ and $g(P)v = 0$. Further,
\[
\begin{split}
g(P - \Pi)v &= (P^{N-1} - \Pi)v - e_1(P^{N-2} - \Pi)v + \cdots + (-1)^{N-1}e_{N-1}v \\
&= P^{N-1}v - e_1 P^{N-2}v + \cdots + (-1)^{N-1}e_{N-1}v \\
&= g(P)v = 0.
\end{split}
\]

Any column vector $w$ can be written as $w = [w - (\pi w)\1] + (\pi w)\1$, where $\pi[w - (\pi w)\1] = 0$. So,
\[
\begin{split}
g(P - \Pi)w &= g(P - \Pi)(\pi w)\1 \\
&= (\pi w)(P - \Pi - \beta_1 I)\cdots(P - \Pi - \beta_{N-1} I)\1 \\
&= (\pi w)(-\beta_1)\cdots(-\beta_{N-1})\1 \\
&= (-1)^{N-1}e_{N-1}\1(\pi w).
\end{split}
\]
It follows that $g(P - \Pi) = (-1)^{N-1}e_{N-1}\Pi$, as desired.
\end{proof}

Proposition \ref{cayley-hamilton} expresses $P^{N-1} - \Pi$ as a linear combination of the matrices $P^k - \Pi$ for $0\leq k\leq N-2$. This is a linear recurrence relation, so any power $P^t - \Pi$ can be written as a linear combination of the same $N-1$ matrices. We are interested in the coefficients in that linear combination.

\begin{prop} \label{recurrence}
Under the conditions of Proposition \ref{cayley-hamilton}, let $C$ be the companion matrix of $g(x)$. Then for all $t\geq 0$,
\[
P^t - \Pi = \sum_{k=0}^{N-2} C^t(k+1,1)(P^k - \Pi).
\]
\end{prop}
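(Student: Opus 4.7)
The plan is to verify that both sides of the claimed identity satisfy the same linear recurrence for $t \geq N-1$ and agree at the initial values $t = 0, 1, \ldots, N-2$; equality for all $t$ then follows by induction. Write $M_t := P^t - \Pi$ throughout. The target recurrence, read off from the relation $g(x) = 0$ satisfied by each root of $g$, is
\[
u_t = e_1 u_{t-1} - e_2 u_{t-2} + \cdots + (-1)^{N-2} e_{N-1}\, u_{t-N+1} \qquad (t \geq N-1).
\]

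For the base cases, I would use that the first column of $C^t$ has a single nonzero entry equal to $1$ whenever $0 \leq t \leq N - 2$. This is because $C$ has $1$'s on the subdiagonal and $0$'s elsewhere in its first $N-2$ columns, so iterated multiplication shifts the lone $1$ in the first column of $I = C^0$ down one row at a time; concretely, $C^t(k+1,1) = 1$ if $k = t$ and $0$ otherwise. The right-hand side of the claim therefore collapses to $M_t$, matching the left-hand side.

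The recurrence on the coefficient side follows from the Cayley-Hamilton Theorem applied to $C$: since $g$ is the characteristic polynomial of $C$, $g(C) = 0$, i.e., $C^{N-1} = e_1 C^{N-2} - e_2 C^{N-3} + \cdots + (-1)^{N-2} e_{N-1} I$. Multiplying by $C^{t - N + 1}$ for any $t \geq N - 1$ and reading off the $(k+1, 1)$ entry shows that $(C^t(k+1, 1))_t$ satisfies the target recurrence for each fixed $k$, and hence so does the matrix sum $\sum_{k=0}^{N-2} C^t(k+1, 1) M_k$.

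For the recurrence on the $M_t$ side, I would use the identity $g(P - \Pi) = (-1)^{N-1} e_{N-1} \Pi$ established in the proof of Proposition \ref{cayley-hamilton}. At $t = N-1$, expanding $g(P - \Pi)$ via $(P - \Pi)^k = M_k$ for $k \geq 1$ and rewriting the constant term together with the right-hand side as $(-1)^{N-1} e_{N-1}(I - \Pi) = (-1)^{N-1} e_{N-1} M_0$ recovers the recurrence at $t = N-1$. For $t \geq N$, I multiply $g(P - \Pi) = (-1)^{N-1} e_{N-1} \Pi$ on the left by $(P - \Pi)^{t - N + 1}$; since $P\Pi = \Pi$ gives $(P - \Pi)\Pi = 0$, the right-hand side vanishes, and expanding the left-hand side using $(P-\Pi)^k = M_k$ (now valid for all exponents appearing, since the smallest is $t - N + 1 \geq 1$) yields the recurrence for $M_t$. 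The induction on $t \geq N-1$ is then immediate. I don't anticipate a real obstacle; the only subtle point is keeping straight the difference between $(P-\Pi)^0 = I$ and $M_0 = I - \Pi$, which is absorbed cleanly by the $\Pi$ term in $g(P-\Pi) = (-1)^{N-1}e_{N-1}\Pi$.
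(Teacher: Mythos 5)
Your proposal is correct, and every step checks out: the first column of $C^t$ for $0\leq t\leq N-2$ is indeed the standard basis vector $\delta_{t+1}$, the sign pattern of the recurrence matches $g$, and the multiplication of $g(P-\Pi)=(-1)^{N-1}e_{N-1}\Pi$ by $(P-\Pi)^{t-N+1}$ together with $(P-\Pi)\Pi=0$ does yield the recurrence for $M_t=P^t-\Pi$ when $t\geq N$. The route is genuinely different in organization from the paper's, though both hinge on Proposition \ref{cayley-hamilton}. The paper runs a single-step induction from the lone base case $t=0$: it multiplies the inductive identity by $P-\Pi$, substitutes Proposition \ref{cayley-hamilton} for the top term $P^{N-1}-\Pi$, and recognizes the resulting coefficients as $C^{t+1}(k+1,1)=C^t(k,1)+(-1)^{N-2-k}e_{N-1-k}C^t(N-1,1)$ directly from the definition of matrix multiplication by $C$ --- no Cayley--Hamilton for $C$ and no explicit computation of $C^t$ for small $t$ are needed. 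Your argument instead shows that the two sequences satisfy the same order-$(N-1)$ linear recurrence and agree on $N-1$ initial values, which requires the (standard but extra) fact that $g$ annihilates its companion matrix, plus the description of the first column of $C^t$ for $t\leq N-2$ that the paper only establishes later inside Theorem \ref{companion}. What your version buys is a clean conceptual separation between the coefficient dynamics and the matrix dynamics, and it makes transparent why the formula must hold; what the paper's version buys is economy of prerequisites and a uniform treatment of all $t\geq 0$ in one inductive step.
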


\begin{proof}
The proof is by induction on $t$. When $t=0$, the statement is trivial. For the inductive step, suppose the statement is true for $t$. Then
\[
\begin{split}
P^{t+1} - \Pi &= \sum_{k=0}^{N-2} C^t(k+1,1)(P^{k+1} - \Pi) \\
&= \sum_{k=0}^{N-2} C^t(k,1)(P^k - \Pi) + C^t(N-1,1)(P^{N-1} - \Pi),
\end{split}
\]
where we take $C^t(k,1) = 0$ when $k=0$. By Proposition \ref{cayley-hamilton},
\[
P^{N-1} - \Pi = e_1(P^{N-2} - \Pi) - e_2(P^{N-3} - \Pi) + \cdots + (-1)^{N-2}e_{N-1}(I - \Pi),
\]
meaning that
\[
P^{t+1} - \Pi = \sum_{k=0}^{N-2} [C^t(k,1) + (-1)^{N-2-k}e_{N-1-k}C^t(N-1,1)](P^k - \Pi).
\]
By the definition of $C$,
\[
\begin{split}
C^{t+1}(k+1,1) &= \sum_{j=1}^{N-1} C(k+1,j)C^t(j,1)  \\
&= C^t(k,1) + (-1)^{N-2-k}e_{N-1-k}C^t(N-1,1),
\end{split}
\]
which proves that
\[
P^{t+1} - \Pi = \sum_{k=0}^{N-2} C^{t+1}(k+1,1)(P^k - \Pi). \qedhere
\]
\end{proof}

To bound the total variation distance after $t$ steps, we need only bound the coefficients $C^t(k+1,1)$. This is accomplished using Theorem \ref{companion}.

\begin{thm} \label{tv-bound}
Let $P$ be a Markov transition matrix on a state space $\X$ of size $N$. Suppose the eigenvalues $\beta_1,\ldots,\beta_N = 1$ satisfy $|\beta_j| \leq \beta_\star < 1$ for $j<N$. Denote by $\pi$ the stationary distribution for $P$. For all $x\in\X$ and all $t\geq N-1$,
\[
\|P^t(x,\cdot) - \pi\|_\TV \leq (N-1)\binom{t}{N-1} \sum_{k=0}^{N-2} \binom{N-2}{k} \frac{\beta_\star^{t-k}}{t-k}.
\]
\end{thm}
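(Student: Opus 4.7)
The plan is to combine Proposition \ref{recurrence} with the Schur polynomial expression from Theorem \ref{companion} and a standard count of semistandard Young tableaux of hook shape. Since $|\beta_j| \leq \beta_\star < 1$ for $j < N$, the hypotheses of Proposition \ref{recurrence} are in force, so
\[
P^t - \Pi = \sum_{k=0}^{N-2} C^t(k+1,1)(P^k - \Pi).
\]
Taking the $x$-th row and applying the triangle inequality for the total variation norm on signed measures, together with the trivial bound $\|P^k(x,\cdot) - \pi\|_\TV \leq 1$ for all $k$, reduces the problem to bounding the scalars $|C^t(k+1,1)|$:
\[
\|P^t(x,\cdot) - \pi\|_\TV \leq \sum_{k=0}^{N-2} |C^t(k+1,1)|.
\]

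Next I would unpack $C^t(k+1,1)$ using Theorem \ref{companion}. Because $t \geq N-1$ we have $t + 1 - (N-1) \geq 1$, so the first case of the theorem applies with $m = N-1$, $i = k+1$, $j = 1$, giving
\[
|C^t(k+1,1)| = \bigl|s_{\lambda_k}(\beta_1,\ldots,\beta_{N-1})\bigr|,
\]
where $\lambda_k = (t-N+2,\, 1^{N-2-k})$ is a hook of total size $|\lambda_k| = t-k$. Using the tableau definition of the Schur polynomial, each monomial $\beta_1^{w_1}\cdots\beta_{N-1}^{w_{N-1}}$ with $\sum w_j = t-k$ has modulus at most $\beta_\star^{t-k}$ (this remains valid when the $\beta_j$ are complex), so the triangle inequality yields
\[
|C^t(k+1,1)| \leq \#\bigl\{\text{SSYT of shape } \lambda_k \text{ on } \{1,\ldots,N-1\}\bigr\} \cdot \beta_\star^{t-k}.
\]

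The key combinatorial step, and the main obstacle, is counting SSYT of hook shape. I would invoke the hook-content formula (or equivalently enumerate directly by conditioning on the $(1,1)$-entry, choosing a strictly increasing completion of the first column and a weakly increasing completion of the first row, then simplifying the resulting Vandermonde-style sum) to obtain the closed form
\[
\#\bigl\{\text{SSYT of hook } (a,1^b) \text{ on } \{1,\ldots,m\}\bigr\} = \frac{m}{a+b}\binom{m+a-1}{a-1}\binom{m-1}{b}.
\]
Specializing $a = t-N+2$, $b = N-2-k$, $m = N-1$ gives $\tfrac{N-1}{t-k}\binom{t}{N-1}\binom{N-2}{k}$, where the denominator $t-k \geq t-(N-2) \geq 1$ is strictly positive since $t \geq N-1$.

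Assembling these pieces, summing over $k \in \{0,\ldots,N-2\}$, and factoring $(N-1)\binom{t}{N-1}$ out of the sum produces
\[
\|P^t(x,\cdot) - \pi\|_\TV \leq (N-1)\binom{t}{N-1}\sum_{k=0}^{N-2}\binom{N-2}{k}\frac{\beta_\star^{t-k}}{t-k},
\]
which is exactly the stated bound. Everything outside the hook-SSYT enumeration is a direct consequence of machinery already developed in the paper; the combinatorial count is the one place where genuine work is required.
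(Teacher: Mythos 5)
Your proposal is correct and follows essentially the same route as the paper: Proposition \ref{recurrence} plus the triangle inequality, Theorem \ref{companion} to identify the coefficients as hook Schur polynomials, the termwise bound $|\beta^{w}|\leq\beta_\star^{t-k}$, and a closed-form count of hook-shaped SSYT (your hook-content expression $\frac{m}{a+b}\binom{m+a-1}{a-1}\binom{m-1}{b}$ agrees with the paper's Lemma \ref{ssyt-count}, whose proof is exactly the direct enumeration you mention as the alternative, and it specializes correctly to $\frac{N-1}{t-k}\binom{t}{N-1}\binom{N-2}{k}$). No gaps.
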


\begin{proof}
By Proposition \ref{recurrence},
\[
\|P^t(x,\cdot) - \pi\|_\TV \leq \sum_{k=0}^{N-2} |C^t(k+1,1)| \cdot \|P^k(x,\cdot) - \pi\|_\TV \leq \sum_{k=0}^{N-2} |C^t(k+1,1)|.
\]
Theorem \ref{companion} gives
\[
|C^t(k+1,1)| = |s_{(t+2-N,\underbrace{\scriptstyle 1,1,\ldots,1}_{N-2-k})}(\beta_1,\ldots,\beta_{N-1})| \leq s_{(t+2-N,\underbrace{\scriptstyle 1,1,\ldots,1}_{N-2-k})}(\beta_\star,\ldots,\beta_\star).
\]
This last quantity is simply $\beta_\star^{(t+2-N)+(N-2-k)}$ multiplied by the number of semistandard Young tableaux of shape $(t+2-N,\underbrace{1,1,\ldots,1}_{N-2-k})$ on the alphabet $\{1,\ldots,N-1\}$. By Lemma \ref{ssyt-count}, proved below, the number of such tableaux is
\[
\binom{t-k-1}{t+1-N} \binom{t}{t-k} = (N-1)\binom{t}{N-1} \cdot \binom{N-2}{k}\frac{1}{t-k}.
\]
Thus,
\[
\|P^t(x,\cdot) - \pi\|_\TV \leq \sum_{k=0}^{N-2} (N-1)\binom{t}{N-1} \cdot \binom{N-2}{k}\frac{\beta_\star^{t-k}}{t-k}. \qedhere
\]
\end{proof}

\begin{lemma} \label{ssyt-count}
Let $m\geq 1$, $b\geq 1$, and $c\geq 0$. The number of semistandard Young tableaux of shape $(b,\underbrace{1,1,\ldots,1}_c)$ on the alphabet $\{1,\ldots,m\}$ is
\[
\binom{b+c-1}{b-1} \binom{b+m-1}{b+c}.
\]
\end{lemma}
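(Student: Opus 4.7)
The plan is to count these hook-shaped tableaux directly by conditioning on the value $v = a_{1,1}$ of the top-left entry. Given $v$, the remaining $b-1$ entries of the first row form a weakly increasing sequence with entries drawn from $\{v, v+1, \ldots, m\}$, contributing $\binom{m-v+b-1}{b-1}$ choices. Independently, the $c$ entries below the top form a strictly increasing sequence drawn from $\{v+1, \ldots, m\}$ (strict because the column must be strictly increasing and $a_{1,1} = v$), contributing $\binom{m-v}{c}$ choices. Summing over $v$ from $1$ to $m-c$ and reindexing with $u = m - v - c$ gives the count as
\[
\sum_{u=0}^{m-1-c} \binom{u+c+b-1}{b-1}\binom{u+c}{c}.
\]

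The next step is a routine factorial manipulation to pull a constant out of the sum. Expanding both binomials yields
\[
\binom{u+c}{c}\binom{u+c+b-1}{b-1} = \frac{(u+c+b-1)!}{u!\,c!\,(b-1)!} = \binom{b+c-1}{b-1}\binom{u+c+b-1}{c+b-1},
\]
so the sum becomes $\binom{b+c-1}{b-1}\sum_{u=0}^{m-1-c}\binom{u+c+b-1}{c+b-1}$. To finish I would apply the hockey-stick identity $\sum_{u=0}^{M}\binom{u+r}{r} = \binom{M+r+1}{r+1}$ with $r = c+b-1$ and $M = m-1-c$, which collapses the remaining sum to $\binom{m+b-1}{b+c}$, matching the claimed formula.

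The only real obstacle is recognizing the right factorial rearrangement in the second step; everything else is bookkeeping. One could alternatively invoke the hook-content specialization $s_\lambda(1,1,\ldots,1)$ for hook shapes, but the elementary approach above is self-contained and avoids citing additional machinery.
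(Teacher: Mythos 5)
Your proof is correct and follows essentially the same route as the paper's: condition on the top-left entry, count the first row by stars-and-bars and the first column as a strictly increasing sequence, pull out the constant factor $\binom{b+c-1}{b-1}$ by a factorial rearrangement, and collapse the remaining sum with the hockey-stick identity. No issues.
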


\begin{proof}
Fix $1\leq i\leq m$. We count the number of semistandard Young tableaux of shape $(b,\underbrace{1,1,\ldots,1}_c)$ whose top left element is $a_{11} = i$. The rest of the first row must be a weakly increasing sequence of length $b-1$ whose first element is at least $i$ and whose last element is at most $m$. The standard ``stars and bars'' argument shows that there are $\binom{m-i+b-1}{m-i}$ ways to fill in the first row.

The rest of the first column must be a strictly increasing sequence of length $c$ whose first element is at least $i+1$ and whose last element is at most $m$. There are $\binom{m-i}{c}$ ways to fill in the first column. Therefore, the total number of semistandard Young tableaux of shape $(b,\underbrace{1,1,\ldots,1}_c)$ on the alphabet $\{1,\ldots,m\}$ is
\[
\sum_{i=1}^m \binom{m-i+b-1}{m-i} \binom{m-i}{c} = \sum_{i=1}^m \binom{b+c-1}{b-1} \binom{m-i+b-1}{b+c-1}.
\]
We have
\[
\sum_{i=1}^m \binom{m-i+b-1}{b+c-1} = \sum_{j=c}^{m-1} \binom{b+j-1}{b+c-1} = \binom{b+m-1}{b+c},
\]
which completes the proof.
\end{proof}

Theorem \ref{tv-bound} bounds the total variation distance from stationarity after $t$ steps for $t\geq N-1$. Note that Example \ref{pure-birth-section} has $\|P^t(1,\cdot) - \pi\|_\TV = 1$ for $t\leq N-2$, so $t=N-1$ is the first time that any nontrivial upper bound is possible. On the other hand, the bound given by Theorem \ref{tv-bound} is far from optimal for $t$ near $N$. When $t=N-1$, the bound simplifies to
\[
\|P^t(x,\cdot) - \pi\|_\TV \leq (1+\beta_\star)^{N-1} - 1,
\]
which is much larger than 1 unless $\beta_\star$ is very close to zero.

One interpretation of Theorem \ref{main} is that the bound in Theorem \ref{tv-bound} gets small when $t$ is a constant multiple of $N\trel\log\trel$. Indeed, we are now ready to prove Theorem \ref{main}.

\begin{proof}[Proof of Theorem \ref{main}]
As usual, denote the state space by $\X$ and the eigenvalues by $\beta_1,\ldots,\beta_N = 1$, with $\beta_\star = \max\{ |\beta_j| \st 1\leq j\leq N-1 \}$. If $N=1$, the theorem is trivially true, so assume $N\geq 2$. If $\beta_\star=1$, then $\trel = \infty$ and the theorem has no content. If $\beta_\star < 1$, then in particular, there is a unique stationary distribution $\pi$. Theorem \ref{tv-bound} says that for all $x\in\X$ and $t\geq N-1$,
\[
\begin{split}
\|P^t(x,\cdot) - \pi\|_\TV &\leq (N-1)\binom{t}{N-1} \sum_{k=0}^{N-2} \binom{N-2}{k} \frac{\beta_\star^{t-k}}{t-k} \\
&\leq \binom{t}{N-2} \beta_\star^{t-(N-2)} \sum_{k=0}^{N-2} \binom{N-2}{k} \beta_\star^{N-2-k} \\
&= \binom{t}{N-2} \beta_\star^{t-(N-2)} (1+\beta_\star)^{N-2} \\
&\leq 2^{N-2} \frac{t^{N-2}}{(N-2)!} \beta_\star^{t-(N-2)}.
\end{split}
\]
If $\beta_\star = 0$, then $\|P^t(x,\cdot) - \pi\|_\TV = 0$ for $t\geq N-1$, and Theorem \ref{main} is proved so long as the right hand side $t_\ast = 2\trel \left[ \log\e^{-1} - 1 + 2(1+\log 2)N + N\log\trel \right]$ is greater than or equal to $N-1$. This is true: since $\trel \geq 1$,
\begin{equation} \label{t-ast-n}
t_\ast - (N-1) \geq -1 + (3 + 4\log 2)N \geq 2 + 4\log 2.
\end{equation}

If $\beta_\star > 0$, then given $0<\e<1$, it will suffice to find an integer $N-1 \leq t \leq t_\ast$ such that
\begin{equation} \label{desired-ineq}
2^{N-2} \frac{t^{N-2}}{(N-2)!} \beta_\star^{t-(N-2)} \leq \e.
\end{equation}
For convenience, let $d = N-2$. As $\log d! \geq d\log d - d + 1$, if the following inequality is true, \eqref{desired-ineq} will also hold:
\begin{equation} \label{desired-2}
d\log 2 + d\log t - d\log d + d - 1 + (t-d)\log\beta_\star \leq \log\e.
\end{equation}
Let $f(t) = d\log t + (t-d)\log\beta_\star$. Then $f'(t) = d/t + \log\beta_\star$ and $f''(t) = -d/t^2$. So $f$ is a concave function and will lie below its linearization at any point $t_0$. Choose $t_0 = 2d/\log\beta_\star^{-1}$. Then
\[
f(t) \leq f(t_0) + f'(t_0)(t-t_0) = d\log\left( \frac{2d}{\log\beta_\star^{-1}} \right) - \frac{1}{2}\log\beta_\star^{-1} \left(t - \frac{2d}{\log\beta_\star^{-1}} \right).
\]
If
\begin{equation} \label{t-lower}
t \geq \frac{2}{\log\beta_\star^{-1}} \left[ \log\e^{-1} - 1 + 2(1+\log 2)d + d\log\left( \frac{1}{\log\beta_\star^{-1}} \right) \right],
\end{equation}
then
\[
\begin{split}
f(t) &\leq d\log\left( \frac{2d}{\log\beta_\star^{-1}} \right) - \frac{1}{2}\log\beta_\star^{-1} \left(t - \frac{2d}{\log\beta_\star^{-1}} \right) \\
&\leq -d\log 2 + d\log d - d - \log\e^{-1} + 1,
\end{split}
\]
which yields \eqref{desired-2}. Because $\trel \geq 1/\log\beta_\star^{-1}$, any $t$ such that
\[
t\geq 2\trel \left[ \log\e^{-1} - 1 + 2(1+\log 2)d + d\log\trel \right]
\]
will satisfy \eqref{desired-ineq}.

Since $\trel\geq 1$, we have
\[
t_\ast - 2\trel \left[ \log\e^{-1} - 1 + 2(1+\log 2)d + d\log\trel \right] \geq 8(1 + \log 2) > 1.
\]
Along with \eqref{t-ast-n}, this shows that $\lfloor t_\ast \rfloor$ is an integer greater than $N-1$ satisfying \eqref{desired-ineq}. Thus $\tmix(\e) \leq t_\ast$, and the proof is finished.

Note that the linearization argument does not give away too much. If $\e$ is fixed, the leading term in the lower bound \eqref{t-lower} is
\[
\frac{2d}{\log\beta_\star^{-1}} \log\left( \frac{1}{\log\beta_\star^{-1}} \right).
\]
On the other hand, if we were to set
\[
t = \frac{d}{\log\beta_\star^{-1}} \log\left( \frac{1}{\log\beta_\star^{-1}} \right),
\]
then the left hand side of \eqref{desired-2} would tend to $\infty$ as $\beta_\star\to 1$.
\end{proof}

\bibliographystyle{amsalpha}

\bibliography{lyapunov}


\end{document}